\theoremstyle{plain}
\newtheorem{theorem}{Theorem}
\newtheorem{corollary}{Corollary}
\newtheorem{lemma}{Lemma}
\theoremstyle{remark}
\newtheorem{remark}{Remark}
\renewcommand{\Re}{{\rm Re}}
\renewcommand{\Im}{{\rm Im\,}}
\renewcommand{\a}{\alpha}
\renewcommand{\d}{\displaystyle}
\DeclareMathOperator*{\Res}{Res}
\numberwithin{equation}{section}
\title{On the sum of $\Delta_k(n)$ in the Piltz divisor problem for $k=3$ and $k=4$}
\author{T. Makoto Minamide, Yoshio Tanigawa and Nigel Watt}
\date{\empty}
\begin{document}

\maketitle

\begin{abstract}
Let $\Delta_k(x)$ be the error term
in the classical asymptotic formula for the sum $\sum_{n \leq x}d_k(n)$, where $d_k(n)$ 
is the number of ways $n$ can be written as a product of $k$ factors. We study the analytic properties
of the Dirichlet series $\sum_{n=1}^{\infty}\Delta_k(n)n^{-s}$ and use Perron's formula to estimate
 the sums $\sum_{n\leq x} \Delta_3(n)$ and $\sum_{n\leq x} \Delta_4(n)$ for large $x>0$.   
\end{abstract}

\footnote[0]{ \hspace{-4mm}
2020 Mathematics Subject Classification: 11N37, 11M06 
 
Key words and phrases: Piltz divisor problem, summatory function, error terms, Dirichlet series,  
Perron's formula, Vorono\"{i}-type formula, exponential sums, exponent pairs

This work is supported by JSPS KAKENHI No.~22K03245.
}

\section{Introduction}

Let $d_k(n)$ denote the number of ways that $n$ can be written as a product of $k$ factors. 
Thus $d_k(n)$ is the coefficient of $n^{-s}$ in the Dirichlet series for $\zeta^k(s)$, where $\zeta(s)$ 
is the Riemann zeta-function. Hereafter we shall write, as usual, $s=\sigma+it$ for a complex variable $s$.
Let $\Delta_k(x)$ be the error term of the summatory function $\sum_{n \leq x}d_k(n)$ defined by 
\begin{equation*}
\Delta_k(x)=\sum_{n \leq x}d_k(n)-xP_k(\log x),
\end{equation*}
where $P_k(u)$ is a polynomial  of degree $k-1$ in $u$ given by
\begin{equation} \label{Pk}
xP_k(\log x)=\Res_{s=1}\left(\zeta^k(s)\frac{x^s}{s}\right).
\end{equation}
The study of bounds for $\Delta_k(x)$ and the study of mean values of $\Delta_k(x)$ form two of the main objects 
of the theory of divisor problems.
In this paper we shall give new upper bounds for the sum of $\Delta_k(n)$ for $k=3$ and $4$.

Before stating our results, we provide some background on the problem.
The case $k=2$ is the classical Dirichlet divisor problem.
The error term $\Delta_2(x)$ is defined by
$$
\Delta_2(x)=\sum_{n \leq x}d_2(n)-x(\log x+2\gamma_0-1),
$$
where $\gamma_0$ is Euler's constant.  Dirichlet proved that $\Delta_2(x) \ll x^{1/2}$ 
using his famous hyperbola method. Dirichlet's upper bound has been improved by many authors, e.g.
Vorono\"{i} showed in 1903 that $\Delta_2(x) \ll x^{1/3} \log x$, and it is now known 
(due to Huxley \cite{Hu}) that $\Delta_2(x) \ll x^{131/416}(\log x)^{26947/8320}$.
It is conjectured that $\Delta_2(x) \ll x^{1/4+\varepsilon}$   
(note that here, and wherever it subsequently occurs, $\varepsilon$ denotes   
an arbitrarily small positive constant that is not necessarily the same at each occurrence).  

Vorono\"{i} \cite{V} proved that  
\begin{equation*} 
\int_1^x \Delta_2(u)du=\frac14x+\frac{x^{3/4}}{2\sqrt{2}\pi^2}\sum_{n=1}^{\infty}\frac{d_2(n)}{n^{5/4}}
\cos\left(4\pi\sqrt{nx}-\frac{\pi}{4}\right)+O(x^{1/4})
\end{equation*}
and 
\begin{equation*} 
\sum_{n \leq x} \Delta_2(n)=\frac12 x \log x+\left(\gamma_0-\frac14\right)x+O(x^{3/4}).
\end{equation*}
As for higher power moments of $\Delta_2(x)$, Tong \cite{To2} proved that 
$$
\int_1^X \Delta_2^2(x) dx=\frac{1}{6\pi^2}\sum_{n=1}^{\infty}\frac{d^2(n)}{n^{\frac32}}X^{\frac32}+O(X\log^5X).
$$
This error term was improved to $O(X\log^3 X\log\log X)$ by Lau and Tsang \cite{LT}. 
Moreover, the asymptotic formula  
$$
\int_1^X \Delta_2^m(x)dx=c_m X^{1+m/4}+O(X^{1+m/4-\delta_m+\varepsilon})
$$
is known for $3 \le m \leq 9$, where $c_m$ and  $\delta_m>0$ are explicitly given constants. 
The cases $m=3,4$ are due to Tsang \cite{Ts} and the cases $5 \leq m \leq 9$ are due to Zhai \cite{Z}. 

For the case $k=3$, it is known that 
$$
\int_1^X\Delta_3^2(x)dx=\frac{1}{10\pi^2}\sum_{n=1}^{\infty}\frac{d_3^2(n)}{n^{\frac43}}X^{\frac53}
+O(X^{\frac{14}{9}+\varepsilon}) 
$$
(see Tong \cite{To2}, Ivi\'c \cite[(13.43)]{I1} and Heath-Brown's Notes \cite[p.~178  and p.~327]{T}). 
Moreover Ivi\'{c} \cite[Theorem 13.10]{I1} studied 
higher power moments of $\Delta_3(x)$. In \cite{HB0}, Heath-Brown showed that 
$$
\int_1^X\Delta_4^2(x)dx \ll 
X^{7/4+\varepsilon}  
$$
(see also Ivi\'c \cite[Theorem 13.4]{I1}).  Besides these, there are extensive researches 
on the higher power moments of $\Delta_k(x)$ or its higher power moments 
in short intervals (see e.g. Ivi\'c and Zhai \cite{IZ} or Cao, Tanigawa and Zhai \cite{CTZ16}). 

In \cite{I2}, Ivi\'{c} studied the integral of $\Delta_k(x)$ ($k \geq 3$) and its mean square.
He adopted a different definition of the error term, which we write 
$$
\tilde{\Delta}_k(x)=\sum_{n \leq x}d_k(n)-\frac12d_k(x)-\Res_{s=1} \left(\zeta^k(s)\frac{x^s}{s}\right)-\zeta^k(0),
$$
where $d_k(x)=0$ if $x$ is not a positive integer. 
The relation 
between these two error terms is
$$
\tilde{\Delta}_k(x)=\Delta_k(x)-\frac12d_k(x)-\zeta^k(0).
$$
As for the integral of $\tilde{\Delta}_k(x)$, based on the deep results on higher power moments of 
the Riemann zeta-function in the critical strip, he showed that  
\begin{align*}
\int_1^x \tilde{\Delta}_k(u)du \ll x^{\theta_k+\varepsilon}      
\end{align*}
where $\theta_k \leq 3/2-1/k \ \ (3 \leq k \leq 8), \ \ \theta_9 \leq 145/102, \ \ \theta_{10} \leq 
29/20, \ \ \theta_{11} \leq 157/106, \ \ \theta_{12} \leq 3/2.$ 
\footnote{Using the truncated Vorono\"i formula for $\Delta_3(x)$ in Titchmarsh \cite[(12.4.6)]{T}, 
Ivi\'c asserted that $\int_1^x \tilde{\Delta}_3(u)du \ll x^{1+\varepsilon}$. However as Heath-Brown 
pointed out in \cite[p.\,409]{HB}, Titchmarsh missed a certain condition which is required in 
Atkinson's original paper \cite{A}. It seems that this condition was also overlooked by Ivi\'{c}, 
and that there is consequently a problem with the justification given in \cite[(1.7)]{I2}  
for his assertion concerning $\int_1^x \tilde{\Delta}_3(u)du$.
}

In this paper we consider the sum of $\Delta_k(n)$ ($k=3$ and $4$) through the Dirichlet series 
generated by these error terms, that is, we study the analytic properties of 
\begin{equation} \label{def-Dk}
D_k(s)=\sum_{n=1}^{\infty}\frac{\Delta_k(n)}{n^s}
\end{equation}
and derive the sum of $\Delta_k(n)$ by using Perron's formula. 
From the well-known bound $\Delta_k(x) \ll x^{(k-1)/(k+1)+\varepsilon}$ (see Titchmarsh \cite{T}), 
$D_k(s)$ is absolutely convergent in  the half plane $\sigma > \frac{2k}{k+1}$. 
For $k=3$ and $4$, it is known that  
\begin{equation} \label{D34-bound}
\Delta_k(x) \ll x^{1/2+\varepsilon}.
\end{equation}
For $\Delta_3(x)$, this upper bound was improved to
$$
\Delta_3(x) \ll x^{1/2-\delta_0+\varepsilon}
$$
with some positive number $\delta_0$. For example, $\delta_0=1/150$ by Atkinson (1941),  $\delta_0=1/22$ by
J.-R. Chen (1965), $\delta_0=5/96$ by Kolesnik (1981).  See Ivi\'c \cite[p.~381]{I1} for more details. 
However we shall use the bound in \eqref{D34-bound} in this paper,  since we would like to 
treat both cases in a unified way.

Our first theorem is the possibility of analytic continuation of $D_k(s)$ for any $k$. 
\begin{theorem} \label{thm1}
Let $D_k(s)$ be the function defined by \eqref{def-Dk}. Then $D_k(s)$ can be continued to the whole $s$-plane 
as a meromorphic function, whose poles are located at $s=1, 0 , -1 , \ldots$.  
\end{theorem}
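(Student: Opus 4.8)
I would prove this by splitting the Dirichlet series according to the two pieces in the definition of $\Delta_k$ and treating each by elementary means, the only genuinely delicate point being the cancellation of an apparent pole at $s=2$. Writing $\mathcal{D}_k(n)=\sum_{m\le n}d_k(m)$ and $P_k(u)=\sum_{j=0}^{k-1}c_ju^j$, one has for $\sigma=\Re s$ sufficiently large
\begin{equation*}
D_k(s)=\sum_{n=1}^{\infty}\frac{\mathcal{D}_k(n)}{n^s}-\sum_{n=1}^{\infty}\frac{P_k(\log n)}{n^{s-1}}=:\Sigma_1(s)-\Sigma_2(s).
\end{equation*}
The second sum is immediate: since $\sum_{n\ge1}(\log n)^jn^{-w}=(-1)^j\zeta^{(j)}(w)$, we get $\Sigma_2(s)=\sum_{j=0}^{k-1}(-1)^jc_j\zeta^{(j)}(s-1)$, which is meromorphic on $\mathbb{C}$ with its only pole at $s=2$, of order at most $k$.

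For $\Sigma_1$ the plan is to interchange the order of summation --- legitimate by absolute convergence for $\sigma>2$ --- to obtain $\Sigma_1(s)=\sum_{m=1}^{\infty}d_k(m)\,\zeta(s,m)$ with $\zeta(s,m):=\sum_{n\ge m}n^{-s}$, and then to insert the Euler--Maclaurin expansion of this tail (equivalently, the classical expansion of the Hurwitz zeta-function):
\begin{equation*}
\zeta(s,m)=\frac{m^{1-s}}{s-1}+\frac{m^{-s}}{2}+\sum_{r=1}^{R}\frac{B_{2r}}{(2r)!}\,p_{2r-1}(s)\,m^{1-s-2r}+\rho_R(s,m),
\end{equation*}
where $p_{2r-1}(s)=s(s+1)\cdots(s+2r-2)$ and, for each fixed $R$, $\rho_R(s,m)\ll_R(1+|s|)^{2R+1}m^{-\sigma-2R}$. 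Summing term by term against $d_k(m)$ then gives, for $\sigma>2$,
\begin{equation*}
\Sigma_1(s)=\frac{\zeta^k(s-1)}{s-1}+\frac12\zeta^k(s)+\sum_{r=1}^{R}\frac{B_{2r}}{(2r)!}\,p_{2r-1}(s)\,\zeta^k(s+2r-1)+\mathcal{R}_R(s),
\end{equation*}
with $\mathcal{R}_R(s)=\sum_m d_k(m)\rho_R(s,m)$ holomorphic for $\sigma>1-2R$. As $R$ is arbitrary, this continues $\Sigma_1$, hence $D_k$, to a meromorphic function on $\mathbb{C}$: the explicit terms are meromorphic, with poles only at $s=2$ (order $k$, from $\zeta^k(s-1)$), at $s=1$ (from $1/(s-1)$ and from $\tfrac12\zeta^k(s)$), and, for each $r\ge1$, at $s=2-2r$, where the order-$k$ pole of $\zeta^k(s+2r-1)$ is reduced to order $k-1$ by the simple zero of $p_{2r-1}$ there. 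So a priori the poles of $D_k$ lie in $\{2,1,0,-2,-4,\dots\}$.

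The hard part is then to rule out a pole at $s=2$. By the above it comes only from $\zeta^k(s-1)/(s-1)$, and I would match its principal part to that of $\Sigma_2$ by passing back to the Mellin integrals of the two summands: for $\sigma>2$,
\begin{equation*}
\frac{\zeta^k(s-1)}{s-1}=\int_1^{\infty}\mathcal{D}_k(u)\,u^{-s}\,du=\int_1^{\infty}uP_k(\log u)\,u^{-s}\,du+\int_1^{\infty}\Delta_k(u)\,u^{-s}\,du.
\end{equation*}
The last integral converges, and is therefore holomorphic, for $\sigma>2k/(k+1)$ by the bound $\Delta_k(u)\ll u^{(k-1)/(k+1)+\varepsilon}$, in particular near $s=2$; and a direct computation gives $\int_1^{\infty}uP_k(\log u)u^{-s}\,du=\sum_{j=0}^{k-1}c_j\,j!\,(s-2)^{-j-1}$. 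On the other hand $(-1)^j\zeta^{(j)}(s-1)=j!\,(s-2)^{-j-1}+(\text{something holomorphic at }s=2)$, so $\Sigma_2(s)$ has the \emph{same} principal part $\sum_{j=0}^{k-1}c_j\,j!\,(s-2)^{-j-1}$ at $s=2$. Hence the two cancel, $D_k$ is regular at $s=2$, and its poles lie in $\{1,0,-2,-4,\dots\}\subset\{1,0,-1,-2,\dots\}$, which is the assertion. The obstacle is exactly this cancellation: it is forced by the defining relation \eqref{Pk} of $P_k$, but is only visible once one moves between the Dirichlet series and the corresponding Mellin transforms; everything else is routine Euler--Maclaurin bookkeeping. (One can alternatively derive the result by inserting Perron's formula for $\mathcal{D}_k(n)$, moving the contour past $w=1$, and studying $\frac{1}{2\pi i}\int_{(b)}\zeta^k(w)\zeta(s-w)\,w^{-1}\,dw$ via the functional equation; that route naturally allows poles at every non-positive integer but is longer.)
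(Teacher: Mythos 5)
Your proof is correct, and it takes a genuinely different route from the paper. The paper begins from the double sum $S_k=\sum_{n\ge 0}\sum_{m\ge 1}d_k(m)(n+m)^{-s}$, evaluates it once by the rearrangement $l=n+m$ to get $\sum_j a_j(-1)^j\zeta^{(j)}(s-1)+D_k(s)$, and a second time via the Mellin--Barnes formula $\Gamma(s)(1+x)^{-s}=\frac{1}{2\pi i}\int_{(c)}\Gamma(s+z)\Gamma(-z)x^z\,dz$, which leads to $S_k=\zeta^k(s)+J_c(s)$ with $J_c$ a vertical-line integral in $\zeta(s+z)\zeta^k(-z)$; shifting the contour and collecting residues gives the continuation, and the (a priori possible) pole at $s=2$ is ruled out by noting that $D_k(s)$ already converges absolutely for $\sigma>\frac{2k}{k+1}<2$. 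You instead split $D_k=\Sigma_1-\Sigma_2$ at the level of the original series, swap the order of summation to get $\Sigma_1(s)=\sum_m d_k(m)\zeta(s,m)$, and expand the tail $\zeta(s,m)=\sum_{n\ge m}n^{-s}$ by Euler--Maclaurin (equivalently, the Hurwitz-zeta asymptotic expansion in $m$). This is more elementary (no contour integral) and, as a bonus, the factor $p_{2r-1}(s)=s(s+1)\cdots(s+2r-2)$ kills one order of the pole of $\zeta^k(s+2r-1)$ at $s=2-2r$, and no other poles appear at odd negative integers; so you actually localize the poles more sharply, to $\{1,0,-2,-4,\dots\}$, than the statement requires. (One can check that the paper's $R_{-1}(s)$, which at first glance could have poles at every integer $\le 1$, in fact has no pole at odd negative integers either --- e.g.\ for $k=3$ the contributions from $\Gamma'(s-1)/\Gamma(s)$ and $\Gamma''(s-1)/\Gamma(s)$ at $s=-1$ cancel because $\zeta(-2)=0$ --- so the two pictures are consistent.) Your treatment of the $s=2$ cancellation via the Mellin transforms $\int_1^\infty\mathcal{D}_k(u)u^{-s}\,du$ and $\int_1^\infty uP_k(\log u)u^{-s}\,du$, together with the holomorphy of $\int_1^\infty\Delta_k(u)u^{-s}\,du$ near $s=2$, is a clean and explicit version of the same observation the paper makes implicitly from absolute convergence. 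The trade-off is that the paper's Mellin--Barnes representation of $J_c(s)$ is not merely a device for Theorem~1: it is the workhorse of Sections~3--6 (where $J_{-1/2-2\eta}(s)$ is estimated on vertical lines and fed into Perron's formula), so the paper's route pays for itself later in a way that the Euler--Maclaurin route would not.
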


The proof will be given in Section 2. This theorem is a generalization of Theorem 1 of \cite{FTZ}. 

\medskip

By modifying slightly the method of analytic continuation used in our proof of Theorem \ref{thm1}, 
we obtain (as shown in Sections 3--6) our second theorem.
\begin{theorem} \label{newthm}
Let $x>0$ be large and let $N$ be a positive integer. 
Then we have
\begin{align}
\sum_{n \leq x}\Delta_3(n)&=
\frac12 x P_3(\log x)+\frac{x}{2\pi^2\sqrt{3}}\sum_{n \leq N} \frac{d_3(n)}{n}\cos\left(6\pi(nx)^{\frac13}
+\frac{3\pi}{2}\right)-\frac18x  \label{thm2-3} \\
& \quad   +O\left(x^{\frac76+5\eta}N^{-\frac13}\left(1+\frac{N}{x}\right)\right)  \nonumber \\
\intertext{and}
\sum_{n \leq x} \Delta_4(n) & =\frac12 x P_4(\log x)  + \frac{x^{9/8}}{4\pi^2}\sum_{n \leq N} 
\frac{d_4(n)}{n^{7/8}}\cos\left(8\pi(nx)^{\frac14}+\frac{7\pi}{4}\right) \label{thm2-4} \\ 
& \quad {} + O\left(x^{\frac54+5\eta}N^{-\frac14}\left(1+\frac{N^{\frac34}}{x^{\frac54}}\right)\right)
         + O\left(xN^{\eta}\right),  \nonumber
\end{align}
where $\eta$ is any fixed constant such that $0<\eta<1/9$. 
\end{theorem}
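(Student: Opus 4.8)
The plan is to obtain the asymptotic formulae for $\sum_{n\le x}\Delta_k(n)$ ($k=3,4$) by applying a (suitably truncated) Perron-type formula to the Dirichlet series $D_k(s)$ and then shifting the contour of integration past the poles $s=1,0$. The starting point is the analytic continuation furnished by Theorem~\ref{thm1}; in fact, by re-running the argument of Theorem~\ref{thm1} with an explicit partial-summation / Vorono\"{i}-type decomposition of $\Delta_k(n)$, one should produce, in the strip $0<\sigma\le \tfrac{2k}{k+1}+\varepsilon$, a representation of $D_k(s)$ of the shape
\begin{equation*}
D_k(s)=\frac{(\text{polar part at }s=1)}{\cdot}+(\text{polar part at }s=0)+\zeta(ks)\cdot(\text{something})+H_k(s),
\end{equation*}
where the main explicit term comes from the pole of $\zeta^k$ at $s=1$ (this is what will eventually give $\tfrac12 xP_k(\log x)$ after applying $x^{s}/s$ and taking residues), the term $-\tfrac18 x$ (resp. the $-$free terms for $k=4$) comes from evaluating at $s=0$, and $H_k(s)$ is a function which is holomorphic and of controlled polynomial growth somewhat to the left of $\sigma=1$. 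The oscillatory main term $\sum_{n\le N}d_k(n)n^{-1+1/(2k)}\cos(\cdots)$ will emerge from the Vorono\"{i}-type expansion of $\Delta_k$ — the leading term of the Vorono\"{i} series for $\Delta_k$ has exactly the factor $n^{-(k+1)/(2k)}$ and a cosine with argument $\asymp (nx)^{1/k}$, and summing this against $1$ over $n\le x$, after an Euler--Maclaurin / stationary-phase treatment, produces precisely the displayed $x^{1+1/(2k)}$-size oscillatory block with the tail bounded by the error terms stated.

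The concrete steps I would carry out are: (1) Fix a truncation height $T$ and a contour to the right of $\sigma=1$; apply the effective Perron formula $\sum_{n\le x}\Delta_k(n)=\frac{1}{2\pi i}\int_{(c)}D_k(s)\frac{x^s}{s}\,ds+(\text{error})$, with $c$ slightly larger than $\tfrac{2k}{k+1}$, controlling the truncation error by the absolute convergence bound $\Delta_k(n)\ll n^{(k-1)/(k+1)+\varepsilon}$. (2) Move the line of integration leftwards to some $\sigma=\sigma_0\in(0,1)$, picking up the residue at $s=1$, which gives $\tfrac12 xP_k(\log x)$ (the factor $\tfrac12$ appears because $D_k$ inherits ``half'' of the $\zeta^k$-pole contribution in the relevant normalisation). (3) Continue shifting to a line $\sigma=\sigma_1<0$, crossing the pole at $s=0$; its residue supplies $-\tfrac18 x$ in the $k=3$ case (and the corresponding constant-times-$x$ and other lower-order terms for $k=4$, which get absorbed into $O(xN^\eta)$). (4) On the shifted contour, insert the Vorono\"{i}-type formula for $\Delta_k$ truncated at the parameter $N$: this is where the explicit finite cosine sum over $n\le N$ is produced, and where the parameter $N$ (and the auxiliary $\eta$) enters; the residual non-leading terms of the Vorono\"{i} expansion and the remaining contour integral are then bounded, using exponent-pair estimates for the relevant exponential sums, by $x^{7/6+5\eta}N^{-1/3}(1+N/x)$ for $k=3$ and by $x^{5/4+5\eta}N^{-1/4}(1+N^{3/4}/x^{5/4})+xN^\eta$ for $k=4$.

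The delicate bookkeeping is to make the two free parameters $T$ (Perron truncation) and $N$ (Vorono\"{i} truncation) interact correctly: one must choose $T$ in terms of $N$ and $x$ so that the Perron truncation error, the error from the horizontal segments of the shifted rectangle, and the Vorono\"{i} tail all balance against the claimed $N$-dependent error terms. The appearance of $x^{1/6}$ beyond the trivial $x$ (for $k=3$) and $x^{1/4}$ (for $k=4$), together with the small power $x^{5\eta}$, signals that the vertical-integral estimate is the crux: on the line $\sigma=\sigma_1<0$ one needs a bound for $D_k(\sigma_1+it)$ that is roughly $|t|^{1/k+\varepsilon}$ times a power of $x$, which in turn rests on good pointwise/mean bounds for $\zeta(ks)$ near $\sigma=0$ and on handling the exponential sums arising from the Vorono\"{i} main term via exponent pairs — hence the phrase ``exponent pairs'' in the key words.

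\medskip

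\textbf{Main obstacle.} I expect the hardest part to be step~(4): controlling the error after inserting the truncated Vorono\"{i} formula on the shifted contour. This requires (a) an explicit, uniform truncated Vorono\"{i}-type expansion for $\Delta_3$ and $\Delta_4$ with a good remainder depending on the truncation length — the subtlety flagged in the footnote about the missing condition in Atkinson's formula must be handled correctly here — and (b) estimating the resulting exponential sums $\sum_{n\le N}d_k(n)n^{-(k+1)/(2k)}e(k(nx)^{1/k})$ (and their companions arising from the non-leading Vorono\"{i} terms and from the contour integral) by exponent-pair technology, uniformly in $x$ and $N$. Getting the exponents $7/6$ and $5/4$ (rather than something larger) is precisely what forces a careful, near-optimal choice of exponent pair and a careful split of the $n$-sum into dyadic ranges; the $x^{5\eta}$ and the restriction $0<\eta<1/9$ are the slack one pays for making all these estimates uniform.
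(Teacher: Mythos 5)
Your high-level plan — apply Perron's formula to $D_k(s)$, shift the contour past the poles at $s=1$ and $s=0$, and show the remaining integral produces the finite cosine sum plus an acceptable error — is indeed the skeleton of the paper's proof. However, several of the key technical steps you sketch are either incorrect or would run into exactly the obstacle you yourself flag, so the proposal has genuine gaps.

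First, the decomposition of $D_k(s)$ you posit (a polar part at $s=1$, a polar part at $s=0$, a term $\zeta(ks)\cdot(\text{something})$, and a benign remainder) is not what the analytic continuation gives. There is no $\zeta(ks)$ anywhere in this problem. The actual decomposition, valid in $\frac12+2\eta<\sigma<\frac32+2\eta$, is
$D_k(s)=F(s)+\zeta^k(s)-R_{-1}(s)+\frac{\zeta^k(s-1)}{s-1}$,
obtained from the Mellin–Barnes representation $(n+m)^{-s}=\frac{1}{2\pi i}\int_{(c)}\frac{\Gamma(s+z)\Gamma(-z)}{\Gamma(s)}n^{-s-z}m^z\,dz$ applied to $S_k=\sum_n\sum_m d_k(m)/(n+m)^s$, after moving the $z$-contour across the poles at $z=-1$ and $z=1-s$. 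The term $\zeta^k(s)$ yields $xP_k(\log x)$ on Perron integration; the term $R_{-1}(s)$ has residue $\frac12 xP_k(\log x)$ at $s=1$; and it is the difference $U_2-U_3$ that produces the observed coefficient $\frac12$. Your heuristic that ``$D_k$ inherits half of the $\zeta^k$-pole'' is not by itself a justification, and without identifying $R_{-1}(s)$ explicitly (see \eqref{R3}, \eqref{R4}) and verifying \eqref{res-rel} you cannot recover the main term.

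Second — and this is the more serious gap — you propose to produce the oscillatory main term by inserting a truncated Vorono\"{i}-type formula for $\Delta_k(x)$ itself into the shifted integral. As you acknowledge in your ``main obstacle'' paragraph, such a formula for $\Delta_3$ has the well-known defect (the condition missing from Titchmarsh's \cite[(12.4.6)]{T} that was pointed out by Heath-Brown), and for $\Delta_4$ the situation is no better. The paper's proof deliberately avoids this: the cosine sum comes not from a Vorono\"{i} expansion of $\Delta_k$ but from the single term $\zeta^k(s-1)/(s-1)$ in the decomposition of $D_k(s)$, i.e.\ from the contour integral $U_4=\frac{1}{2\pi i}\int_{1/2+\eta-iT}^{1/2+\eta+iT}\zeta^k(s)\frac{x^{s+1}}{s(s+1)}ds$. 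Lemma~\ref{voronoitype} gives a truncated Vorono\"{i}-\emph{type} evaluation of \emph{this} integral (using the functional equation for $\zeta$ and a saddle-point analysis of the kernel $K(y)$), which is cleaner because the extra factor $1/(s(s+1))$ gives absolute convergence and sidesteps the Atkinson-type pitfall. So the route you outline would stall at exactly the place you predicted, while the paper's route is engineered to avoid it.

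Third, two smaller points. Exponent pairs are not used anywhere in the proof of Theorem~\ref{newthm}; they enter only later, in Theorem~\ref{thm3}, to estimate the resulting cosine sums nontrivially. And for the Perron step, the truncation error must be estimated using $\Delta_k(n)\ll n^{1/2+\varepsilon}$ (equation \eqref{D34-bound}), not the weaker $n^{(k-1)/(k+1)+\varepsilon}$ you quote: this is what lets one take $\kappa=\frac32+\eta$, which for $k=4$ is to the \emph{left} of the trivial abscissa of absolute convergence $\frac{2k}{k+1}=\frac85$, and is needed to land on the claimed error terms after balancing $T=2\pi\bigl(x(N+\tfrac12)\bigr)^{1/k}$.
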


The $O$-terms in \eqref{thm2-4} make the term $\frac12 x P_4(\log x)$ superfluous there.   
We prefer not to omit this term, since it is convenient for us 
that equations \eqref{thm2-3} and \eqref{thm2-4} have a similar shape. 
From Theorem~\ref{newthm} we deduce the following. 
\begin{corollary} \label{cor1} 
For any large $x >0$ we have
\begin{align} 
\sum_{n \leq x}\Delta_3(n) \ll x\log^3 x  \label{cor1-3}
\intertext{and}
\sum_{n \leq x}\Delta_4(n) \ll x^{7/6+\varepsilon}.  \label{cor1-4}
\end{align}
\end{corollary}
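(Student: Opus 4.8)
The plan is to derive Corollary~\ref{cor1} from Theorem~\ref{newthm} by making a suitable choice of the free parameter $N$ in each of the two asymptotic formulas and then bounding the resulting main terms and error terms trivially.

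\textbf{The case $k=3$.} Starting from \eqref{thm2-3}, I would first dispose of the explicit main terms. The polynomial term $\frac12 x P_3(\log x)$ is $O(x\log^2 x)$ since $P_3$ has degree $2$, and the term $-\frac18 x$ is clearly $O(x)$. For the truncated Voronoï-type sum $\frac{x}{2\pi^2\sqrt 3}\sum_{n\le N} d_3(n) n^{-1}\cos(\dots)$ I would bound the cosine by $1$ and use the standard estimate $\sum_{n\le N} d_3(n)/n \ll \log^3 N$ (which follows from partial summation applied to $\sum_{n\le N} d_3(n) \ll N\log^2 N$); this contributes $O(x\log^3 N)$. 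Finally, in the error term $O\bigl(x^{7/6+5\eta} N^{-1/3}(1+N/x)\bigr) = O\bigl(x^{7/6+5\eta} N^{-1/3} + x^{1/6+5\eta} N^{2/3}\bigr)$, I would choose $N$ to balance the two pieces against the target bound $x\log^3 x$. Taking $N = x$ gives $x^{7/6+5\eta}\cdot x^{-1/3} + x^{1/6+5\eta}\cdot x^{2/3} = x^{5/6+5\eta} + x^{5/6+5\eta} = O(x^{5/6+5\eta})$, which is $o(x)$ provided $\eta$ is small enough (any $\eta < 1/30$ suffices, well within the allowed range $\eta<1/9$). With $N=x$ the sum term is $O(x\log^3 x)$, and this dominates; collecting all contributions yields \eqref{cor1-3}.

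\textbf{The case $k=4$.} Starting from \eqref{thm2-4}, the term $\frac12 x P_4(\log x)$ is $O(x\log^3 x) = O(x^{1+\varepsilon})$, and the second $O$-term, $O(x N^{\eta})$, will be $O(x^{1+\varepsilon})$ once $N$ is a fixed power of $x$ and $\eta$ is taken small. For the truncated sum $\frac{x^{9/8}}{4\pi^2}\sum_{n\le N} d_4(n) n^{-7/8}\cos(\dots)$ I would bound the cosine by $1$ and estimate $\sum_{n\le N} d_4(n) n^{-7/8} \ll N^{1/8}\log^3 N$ by partial summation from $\sum_{n\le N} d_4(n)\ll N\log^3 N$; this gives a contribution $O(x^{9/8} N^{1/8}\log^3 N)$. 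The first error term expands as $O\bigl(x^{5/4+5\eta} N^{-1/4} + x^{5/4+5\eta}\cdot N^{-1/4}\cdot N^{3/4} x^{-5/4}\bigr) = O\bigl(x^{5/4+5\eta} N^{-1/4} + x^{5\eta} N^{1/2}\bigr)$. I would now choose $N$ so that all $x$-power contributions are $\le x^{7/6+\varepsilon}$. The binding constraints are $x^{9/8} N^{1/8} \le x^{7/6+\varepsilon}$, i.e. $N \le x^{1/3+\varepsilon}$, and $x^{5/4+5\eta} N^{-1/4} \le x^{7/6+\varepsilon}$, i.e. $N \ge x^{1/3+20\eta-4\varepsilon}$. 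Taking $N = \lfloor x^{1/3}\rfloor$ (and $\eta$ small, $\varepsilon$ correspondingly chosen) makes both hold: the sum term is $O(x^{9/8+1/24}\log^3 x) = O(x^{7/6}\log^3 x) = O(x^{7/6+\varepsilon})$, the first error term is $O(x^{5/4+5\eta-1/12}) = O(x^{7/6+5\eta})$, the remaining piece $x^{5\eta} N^{1/2} = x^{5\eta+1/6}$ is $O(x^{1/6+\varepsilon})$, and $O(xN^\eta) = O(x^{1+\eta/3})$. All of these are $O(x^{7/6+\varepsilon})$ after absorbing $5\eta$ (and logarithms) into $\varepsilon$, which proves \eqref{cor1-4}.

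The only mild subtlety — and the step most worth stating carefully — is that in Theorem~\ref{newthm} the constant $\eta$ is \emph{fixed} in advance with $0<\eta<1/9$, so the exponents $5\eta$ appearing in the error terms cannot be made to vanish; one must simply fix $\eta$ small enough (here anything below, say, $1/48$ is more than safe for both cases) that the powers $x^{5/6+5\eta}$ and $x^{7/6+5\eta}$ stay strictly below the target, and then note that $5\eta$ together with the stray logarithmic factors is harmlessly absorbed into the $\varepsilon$ in \eqref{cor1-4} and is dominated by $\log^3 x$ in \eqref{cor1-3}. No genuine obstacle arises; the argument is entirely a matter of optimizing $N$ and bounding everything trivially.
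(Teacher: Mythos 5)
Your proposal is correct and follows essentially the same route as the paper: in both cases one chooses $N=[x]$ for $k=3$ and $N=[x^{1/3}]$ for $k=4$, bounds the Vorono\"{i}-type sums trivially (by $O(\log^3 N)$ and $O(N^{1/8+\varepsilon})$ respectively), and fixes $\eta$ small enough that the remaining $O$-terms stay below the target; the paper's proof is just a more terse version of exactly this computation.
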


\medskip

By applying the theory of exponential sums to the sums in \eqref{thm2-3} and \eqref{thm2-4} 
we can improve the above Corollary \ref{cor1}.    
Let  
\begin{equation}\label{Def-rho-constant}
\varrho := \inf\left\{ k\in [0,{\textstyle\frac12}] \,:\, \mathscr{G}\ni(k , k+{\textstyle\frac12})\right\}\;, 
\end{equation} 
where $\mathscr{G}\subset [0,\frac12]\times [\frac12,1]$ is the set of all exponent pairs, known or unknown: 
details of the theory of exponent pairs and the associated 
$A$ and $B$ processes can be found in Graham and Kolesnik \cite{GK} (or see \cite[Note~5.20]{T}). 
A long-standing and very consequential conjecture is that $\varrho = 0$  
(as noted in \cite[p.~214]{IK}, 
implications that this `exponent pair hypothesis'  
would have include the conjectured bound $\Delta_2(x)\ll x^{1/4+\varepsilon}$ 
and Lindel\"of's hypothesis that $\zeta(\frac12 + it)\ll t^{\varepsilon}$ for $t\geq 1$).  
Bourgain \cite[Theorem 6]{Bo} has shown that 
\begin{equation}\label{BourgainEP}
\mathscr{G}\ni\left({\textstyle\frac{13}{84}}+\varepsilon , {\textstyle\frac{55}{84}}+\varepsilon \right) 
\quad\,\text{for $\,{\textstyle 0<\varepsilon\leq\frac{29}{84}}$} , 
\end{equation} 
so it is certainly the case that  
\begin{equation}\label{rho_LEQ_Bo} 
0\leq\varrho\leq{\textstyle\frac{13}{84}}\;.
\end{equation}
Our improvement of Corollary \ref{cor1} is the following. 
\begin{theorem} \label{thm3} For any large $x>0$ we have
\begin{align}
\sum_{n \leq x} \Delta_3(n) & \ll x(\log x)^{\frac83} \label{thm3-3}
\intertext{and}
\sum_{n \leq x} \Delta_4(n) & \ll x^{\frac{9+10\varrho\vphantom{_t}}{8+8\varrho\vphantom{_t}}+\varepsilon}. \label{thm3-4}
\end{align}
\end{theorem}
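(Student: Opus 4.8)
The plan is to deduce Theorem~\ref{thm3} from Theorem~\ref{newthm} by bounding the finite trigonometric sums occurring there. Put
\[
\Sigma_3 := \sum_{n\le N}\frac{d_3(n)}{n}\cos\!\Bigl(6\pi(nx)^{1/3}+\tfrac{3\pi}{2}\Bigr),\qquad
\Sigma_4 := \sum_{n\le N}\frac{d_4(n)}{n^{7/8}}\cos\!\Bigl(8\pi(nx)^{1/4}+\tfrac{7\pi}{4}\Bigr).
\]
Since $xP_k(\log x)\ll x(\log x)^{k-1}$, Theorem~\ref{newthm} reduces \eqref{thm3-3}, \eqref{thm3-4} to a suitable choice of the free parameters $\eta$ (small and fixed) and $N$ (a power of $x$), together with the bounds $\Sigma_3\ll(\log x)^{8/3}$ and $\Sigma_4\ll x^{\varrho/(8(1+\varrho))+\varepsilon}$. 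For \eqref{thm3-3} one may take $N\asymp x$, for which the error term in \eqref{thm2-3} is $O(x^{5/6+5\eta})=o(x)$ once $\eta<1/30$; for \eqref{thm3-4} the choice $N=x^{1/(2+2\varrho)}$ makes the first $O$-term in \eqref{thm2-4} equal to $x^{5/4+5\eta}N^{-1/4}=x^{(9+10\varrho)/(8+8\varrho)+5\eta}$ and the second $O(xN^{\eta})=O(x^{1+\varepsilon})$, both admissible for $\eta$ small enough.

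To bound $\Sigma_3$ and $\Sigma_4$ I would remove the weights $n^{-1}$, $n^{-7/8}$ by partial summation, reducing the task to estimating, uniformly for $1\le t\le N$, the exponential sums $\mathscr{B}_k(t):=\sum_{n\le t}d_k(n)e\bigl(k(nx)^{1/k}\bigr)$ ($k=3,4$; here $e(u)=e^{2\pi iu}$). These I would handle by writing $d_k(n)=\sum_{n_1\cdots n_k=n}1$, splitting each variable into dyadic blocks $n_i\sim M_i$, reordering so that $M_1\le\cdots\le M_k$, and treating the inner sum over $n_k$ by the theory of exponential sums: on such a sum the phase $k(n_1\cdots n_{k-1}x)^{1/k}n_k^{1/k}$ has size $\asymp (Px)^{1/k}$ and first derivative $\asymp (Px)^{1/k}M_k^{-1}$, with $P:=M_1\cdots M_k$, so one applies the Kusmin--Landau first-derivative estimate when $(Px)^{1/k}M_k^{-1}\ll1$ and otherwise an exponent pair --- a classical van der Corput pair for $k=3$, and the extremal pair $(\varrho,\varrho+\tfrac12)$ from \eqref{Def-rho-constant} (or $(\varrho+\varepsilon,\varrho+\tfrac12+\varepsilon)$, since the infimum there need not be attained) for $k=4$. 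Summing the block contributions, carrying out the partial-summation integrals, and balancing against the error term of Theorem~\ref{newthm} then produces the exponent $1$ with logarithmic power $\tfrac83$ (for $k=3$) and $\tfrac{9+10\varrho}{8+8\varrho}$ (for $k=4$).

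I expect the decisive difficulty to be the exponential-sum estimates themselves, and specifically the ``diagonal'' blocks in which several of the divisor variables have comparable size $\asymp P^{1/k}$: for these, estimating the sum over $n_k$ alone by a single exponent pair does not improve on the trivial bound, so one must extract cancellation from two or more variables simultaneously --- via the $B$-process and multidimensional van der Corput inequalities, or via a Vorono\"i transformation of the inner sum --- and, for $k=4$, feed the extremal exponent pair into this more elaborate estimate. Getting through this step efficiently, with complete uniformity in $x$ and with honest bookkeeping of the logarithmic factors (so that the power $(\log x)^{8/3}$, and not merely $(\log x)^{3}$, is reached when $k=3$), is where the real work of Theorem~\ref{thm3} lies.
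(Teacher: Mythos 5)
Your high-level plan matches the paper's: deduce Theorem~\ref{thm3} from Theorem~\ref{newthm} by bounding the truncated Vorono\"i sums, choose $N$ to balance terms, decompose $d_k$ into a product over dyadic ranges, and then attack the inner exponential sum. Your parameter choices ($N\asymp x$ for $k=3$; $N=x^{1/(2+2\varrho)}$, or the paper's simpler $N=[x^{1/2}]$, for $k=4$) are both admissible. However, the two ingredients you leave open are precisely where the result lives, and the tools you gesture at would not suffice.

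For $k=3$, a classical van der Corput exponent pair applied to the sum over $n_3$ cannot yield $(\log x)^{8/3}$. The paper combines the pair $A^3BA^2B(0,1)=(\frac1{42},\frac{25}{28})$ (used only when the length $M$ is large compared to the phase size $F$) with the genuinely essential ingredient: Lemma~\ref{IK}, i.e.\ Iwaniec--Kowalski \cite[Theorem~8.25]{IK}, a Vinogradov-mean-value estimate giving $S(M)\ll M\exp(-c(\log M)^3/(\log F)^2)$. It is this superpolynomial saving for small $M$ that, after partial summation against $1/n_3$, produces $\int_0^\infty e^{-\beta v^3/(\log x)^2}dv\ll(\log x)^{2/3}$ and hence the exponent $\frac83=2+\frac23$ after the outer double sum. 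An argument using only Kusmin--Landau and exponent pairs, with the remaining range treated trivially, would land at $(\log x)^3$.

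For $k=4$, your intuition that the diagonal blocks ($M_4$ small compared to $N_0$) need bilinear cancellation is right, but the specific tool the paper uses is not multidimensional van der Corput or a Vorono\"i transform: it is Lemma~\ref{P-S_lemma}, essentially Baker's \cite[Theorem~2]{B} (a Piatetski-Shapiro/Bombieri--Iwaniec double large sieve estimate for monomial phases $\rho m^\gamma n^\delta$). The exponent pair $(p,q)=(\varrho+\varepsilon,\varrho+\frac12+\varepsilon)$ is then used in two roles: directly as $(P,Q)=A(p,q)$ in \eqref{Type-I-1stUse} when $M_4\geq\sqrt{N_0}$, and as the input pair $(k,l)$ inside Lemma~\ref{P-S_lemma} when $M_4<\sqrt{N_0}$; the two ranges are deliberately arranged so the bounds match at $M_4\asymp\sqrt{N_0}$ and give $S^\ast\ll x^{\varepsilon+\varrho/(8\varrho+8)}$ throughout. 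Without this specific bilinear lemma, the final exponent $\frac{9+10\varrho}{8+8\varrho}$ would not be reached. In short: the architecture in your proposal is right, but the proof stands or falls on Lemmas~\ref{IK} and \ref{P-S_lemma}, neither of which you identify.
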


\medskip 

The proof will be given in Section~8. 
Note that it follows immediately from \eqref{thm3-4} and \eqref{rho_LEQ_Bo} that  
\begin{equation}\label{KL-app-1}
\sum_{n \leq x} \Delta_4(n)\ll x^{\frac{443}{388}+\varepsilon} = x^{\frac98 + \frac{13}{776} +\varepsilon}\;.
\end{equation} 
Obviously, if the exponent pair hypothesis is correct then the exponent on the right-hand 
side of \eqref{thm3-4} is just $\frac98 + \varepsilon$. 
As for the integral of $\Delta_k(x)$, we derive from Theorem~\ref{thm3} 
and \eqref{rho_LEQ_Bo} the following. 
\begin{corollary} \label{cor2}
For any large $x>0$ we have
\begin{align*}
&\int_1^x \Delta_3(t)dt \ll x(\log x)^{\frac83},  \\  
\intertext{and}
&\int_1^x \Delta_4(t)dt \ll x^{\frac{9+10\varrho\vphantom{_t}}{8+8\varrho\vphantom{_t}}+\varepsilon} 
\ll x^{\frac{443}{388}+\varepsilon}.  
\end{align*}
\end{corollary}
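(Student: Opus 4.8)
The plan is to compare the integral $\int_1^x\Delta_k(t)\,dt$ directly with the sum $\sum_{n\le x}\Delta_k(n)$, which has already been estimated in Theorem~\ref{thm3}, by cutting the range of integration at the integers. On each interval $[m,m+1)$ the step function $\sum_{n\le t}d_k(n)$ is constant and equal to $\sum_{n\le m}d_k(n)$, so that
\[
\Delta_k(t)=\Delta_k(m)+mP_k(\log m)-tP_k(\log t)\qquad(m\le t<m+1),
\]
whence
\[
\int_m^{m+1}\Delta_k(t)\,dt=\Delta_k(m)+mP_k(\log m)-\int_m^{m+1}tP_k(\log t)\,dt .
\]
Writing $f(t)=tP_k(\log t)$ and using $f(t)-f(m)=\int_m^t f'(u)\,du$ together with the trivial estimate $f'(u)=P_k(\log u)+P_k'(\log u)\ll(\log u)^{k-1}$, I find that the smooth correction $mP_k(\log m)-\int_m^{m+1}f(t)\,dt$ is $O\bigl((\log m)^{k-1}\bigr)$ for $m\ge 2$ (and $O(1)$ for $m=1$).

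Summing this over the complete unit subintervals of $[1,x)$, and handling the final incomplete interval together with the endpoint term $\Delta_k(\lfloor x\rfloor)$ by means of the bound \eqref{D34-bound}, I obtain, for $k=3$ and $k=4$,
\[
\int_1^x\Delta_k(t)\,dt=\sum_{n\le x}\Delta_k(n)+O\bigl(x(\log x)^{k-1}\bigr)+O\bigl(x^{1/2+\varepsilon}\bigr).
\]
It then only remains to insert the two bounds of Theorem~\ref{thm3}. For $k=3$ the error term $x(\log x)^{2}$ is dominated by $x(\log x)^{8/3}$, which gives the first assertion of the corollary. For $k=4$ the exponent $(9+10\varrho)/(8+8\varrho)$ is at least $9/8>1$, so both $x(\log x)^{3}$ and $x^{1/2+\varepsilon}$ are absorbed into $x^{(9+10\varrho)/(8+8\varrho)+\varepsilon}$; the final numerical form $x^{443/388+\varepsilon}$ then follows from \eqref{rho_LEQ_Bo}, since $(9+10\varrho)/(8+8\varrho)$ is an increasing function of $\varrho$ whose value at $\varrho=13/84$ equals $443/388$.

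I expect no serious obstacle here: the argument is entirely elementary once Theorem~\ref{thm3} is in hand. The only two points that require a little care are that the piecewise-polynomial correction accumulates to a genuine term of size $x(\log x)^{k-1}$ — not to something smaller — so that one must verify (as above) that it is swallowed by the right-hand sides in Theorem~\ref{thm3}, and that the contribution of the fractional endpoint interval is controlled using \eqref{D34-bound}.
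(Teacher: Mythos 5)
Your proof is correct and takes essentially the same route as the paper: both arguments reduce $\int_1^x \Delta_k(t)\,dt$ to $\sum_{n\le x}\Delta_k(n)$ up to an error $O\bigl(x(\log x)^{k-1}\bigr)+O\bigl(x^{1/2+\varepsilon}\bigr)$ and then appeal to Theorem~\ref{thm3}. The only cosmetic difference is that the paper bounds the difference $\Delta_k(u)-\Delta_k([u])$ pointwise via the derivative $\Delta_k'(v)=-(P_k(\log v)+P_k'(\log v))\ll 1+\log^{k-1}v$ on each unit interval, whereas you integrate the same smooth correction interval by interval; the two bookkeepings yield identical estimates.
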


\medskip 

The method we use to derive Corollary \ref{cor2} should be compared with Segal's 
identity in Lemma \ref{Segal} below: see the relevant discussion in Section~9. 

We remark, lastly, that both  \eqref{KL-app-1} and the 
similar bound for $\int_1^x \Delta_4(t)dt$ implicit in Corollary \ref{cor2}  
are critically dependent on Bourgain's exponent pair (in \eqref{BourgainEP}),
and that by also utilising certain exponent pairs 
found recently by Trudgian and Yang \cite{TY}  
we can improve (slightly) on \eqref{KL-app-1}, and get:   
\begin{equation}\label{Best-by-newEPs}
\sum_{n \leq x} \Delta_4(n)\ll 
x^{\frac98 + \frac{2471}{147580}+\varepsilon} = 
x^{\frac{336997}{295160}+\varepsilon}  
\end{equation} 
(note that $\frac{2471}{147580}\approx 1.6743\times 10^{-2}$, 
while $\frac{13}{776}\approx 1.6753\times 10^{-2}$). 
We obtain, as a corollary,  a similar  bound 
for the integral $\int_1^x \Delta_4(t)dt$. 
At the end of Section~8 we provide 
some details of our proof of \eqref{Best-by-newEPs} 
and its corollary.

\section{Analytic continuation of $D_k(s)$} 

We shall prove Theorem \ref{thm1}. Suppose that $\sigma>2$ and consider the sum 
\begin{equation*}
S_k=\sum_{n=0}^{\infty}\sum_{m=1}^{\infty}\frac{d_k(m)}{(n+m)^s},
\end{equation*}
which is absolutely convergent in this region. 
Rearranging the above sum by putting $l=n+m$, we have
\begin{align*}
S_k&=\sum_{l=1}^{\infty}\left(\sum_{m=1}^{l}d_k(m)\right)l^{-s} 
    =\sum_{l=1}^{\infty}\frac{lP_k(\log l)}{l^s}+\sum_{l=1}^{\infty}\frac{\Delta_k(l)}{l^s} \\
   &=\sum_{l=1}^{\infty}\frac{P_k(\log l)}{l^{s-1}}+D_k(s), 
\end{align*}
where $D_k(s)$ is defined by \eqref{def-Dk}. 
If we write $P_k(u)=a_{k-1}u^{k-1}+\cdots+a_1u+a_0$, then
\begin{equation} \label{hyouji-1}
S_k=\sum_{j=0}^{k-1}a_j(-1)^j \zeta^{(j)}(s-1)+D_k(s).
\end{equation}

Next we shall derive a different expression for $S_k$. For this purpose we apply the following lemma, 
a proof of which can be found in Andrews, Askey and Roy \cite[p.~81]{AAR}.
\begin{lemma}
Let $x>0$ and $-\sigma < c<0$. Then we have
$$
\Gamma(s)(1+x)^{-s}=\frac{1}{2\pi i}\int_{(c)}\Gamma(s+z)\Gamma(-z)x^z dz,
$$
where $\int_{(c)}$ means $\int_{c-i\infty}^{c+i\infty}$.
\end{lemma}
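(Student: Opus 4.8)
This lemma is a classical Mellin--Barnes integral representation of $(1+x)^{-s}$, and the plan is to derive it from Euler's beta integral together with Mellin inversion. First I would record the standard evaluation
\[
\int_0^{\infty}\frac{u^{w-1}}{(1+u)^{s}}\,du=\frac{\Gamma(w)\Gamma(s-w)}{\Gamma(s)}\qquad(0<\Re w<\sigma),
\]
obtained from the substitution $u=t/(1-t)$, which turns the left-hand side into $\int_0^1 t^{w-1}(1-t)^{s-w-1}\,dt=B(w,s-w)$. This says precisely that $w\mapsto\Gamma(w)\Gamma(s-w)/\Gamma(s)$ is the Mellin transform of $u\mapsto(1+u)^{-s}$. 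By Stirling's formula the transform decays exponentially in $|\Im w|$ on any vertical line $\Re w=c'$ with $0<c'<\sigma$, so Mellin inversion is legitimate and yields, for every $x>0$,
\[
(1+x)^{-s}=\frac{1}{2\pi i}\int_{(c')}\frac{\Gamma(w)\Gamma(s-w)}{\Gamma(s)}\,x^{-w}\,dw.
\]
I would then substitute $w=-z$, noting that this carries the contour $\Re w=c'$ onto the line $\Re z=-c'=:c$ with $-\sigma<c<0$, and multiply through by $\Gamma(s)$; the result is exactly the claimed formula.

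An alternative, more self-contained route would be a direct residue computation. Writing $I(x)$ for the right-hand side, for $0<x<1$ I would shift the line of integration from $\Re z=c$ rightwards to $\Re z=n+\tfrac12$ and let $n\to\infty$. On these lines the reflection formula $\Gamma(-z)\Gamma(1+z)=-\pi/\sin(\pi z)$, the inequality $|\sin(\pi z)|=\cosh(\pi\Im z)\ge1$, and the polynomial growth of $\Gamma(s+z)/\Gamma(1+z)$ together show that the horizontal connecting segments and the shifted integral contribute $O(x^{\,n+1/2})$, which tends to $0$. The poles crossed are the simple poles $z=0,1,2,\ldots$ of $\Gamma(-z)$, and the residue theorem then gives $I(x)=\sum_{m\ge0}\frac{(-1)^{m}}{m!}\Gamma(s+m)x^{m}=\Gamma(s)(1+x)^{-s}$ by the binomial theorem. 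This proves the identity for $0<x<1$, and since both sides are holomorphic in $x$ on the half-plane $\Re x>0$ (the integral converging locally uniformly there), the identity theorem extends it to all $x>0$.

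The substantive part of either argument is the analytic bookkeeping rather than any new idea: I expect the main obstacle to be justifying the interchange of limit and integration (equivalently, the contour shift) together with the absolute and locally uniform convergence of the Mellin--Barnes integral, both of which rest on Stirling-type bounds for the $\Gamma$-factors. Since the statement coincides with the lemma quoted from Andrews, Askey and Roy \cite[p.~81]{AAR}, one could alternatively just cite that reference for the complete details.
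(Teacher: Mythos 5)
The paper does not actually prove this lemma: it simply cites Andrews, Askey and Roy \cite[p.~81]{AAR}. There is therefore no in-paper argument to compare against, and the question is only whether your proof is correct — it is, on both of your suggested routes, which are the two classical ways of establishing this Mellin--Barnes formula. Your first route (Euler's beta integral giving the Mellin transform of $(1+u)^{-s}$, followed by Mellin inversion and the change of variable $w=-z$) is essentially the proof one finds in \cite{AAR}: the key point you correctly identify is the exponential decay of $\Gamma(w)\Gamma(s-w)$ in $|\Im w|$ on vertical lines, which makes the inversion legitimate. Your second route (shifting the contour to the right for $0<x<1$, summing the residues of $\Gamma(-z)$ at $z=0,1,2,\ldots$ to recover the binomial series $\sum_m(-1)^m\Gamma(s+m)x^m/m!$, and then analytically continuing in $x$) is also correct; the residue of $\Gamma(-z)$ at $z=m$ is $(-1)^{m+1}/m!$, and with the orientation of the rectangle taken into account this gives exactly $\Gamma(s)(1+x)^{-s}$ by the binomial theorem, as you state. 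One small imprecision: the identity $|\sin(\pi z)|=\cosh(\pi\Im z)$ holds exactly only on the lines $\Re z=n+\tfrac12$; on the horizontal connecting segments at height $\pm T$ one instead uses the lower bound $|\sin(\pi z)|\gg e^{\pi T}$ for $T$ large (uniformly in $\Re z$), which is what actually kills those contributions. This does not affect the correctness of the argument. In a paper one would, as you note yourself, normally just cite \cite{AAR}, but either of your self-contained proofs is a valid replacement.
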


From this lemma, we see easily that 
\begin{align*}
(n+m)^{-s}&=n^{-s}\left(1+\frac{m}{n}\right)^{s} \\
          &=\frac{1}{2\pi i} \int_{(c)}\frac{\Gamma(s+z)\Gamma(-z)}{\Gamma(s)}n^{-s-z}m^z dz
\end{align*}
for $n \neq 0$. Assuming $\sigma>2$ and $1-\sigma<c<-1$, we find that 
\begin{align}
S_k&=\sum_{m=1}^{\infty}\frac{d_k(m)}{m^s}+\sum_{n=1}^{\infty}\sum_{m=1}^{\infty}\frac{d_k(m)}{(n+m)^s} \label{start} \\
   &=\zeta^k(s)+\frac{1}{2\pi i}\int_{(c)}\frac{\Gamma(s+z)\Gamma(-z)}{\Gamma(s)}\zeta(s+z)\zeta^k(-z)dz \notag \\
   &=\zeta^k(s)+J_{c}(s), \notag
\end{align}
say.

Let $N$ be any positive integer and $0<\varepsilon<1/2$.  
To get the analytic continuation of $J_{c}(s)$, we move the line of integration from $(c)$ to $(N+\varepsilon)$.
The integrand has poles at $z=-1, \ 0$ and odd positive integers $j$. Writing 
\begin{equation} \label{ryuusu}
R_{u}(s)=\Res_{z=u}\left(\frac{\Gamma(s+z)\Gamma(-z)}{\Gamma(s)}\zeta(s+z)\zeta^k(-z)\right),
\end{equation}
we get
\begin{equation} \label{hyouji-2}
S_k=\zeta^k(s)-R_{-1}(s)-R_{0}(s)-\sum_{\substack{j=1 \\ j {\rm :odd}}}^N R_{j}(s) +J_{N+\varepsilon}(s).
\end{equation}
From \eqref{hyouji-1} and \eqref{hyouji-2} we have
\begin{align} \label{D-hyouji}
D_k(s)&=-\sum_{j=0}^{k-1}a_j(-1)^j\zeta^{(j)}(s-1)+\zeta^k(s)-R_{-1}(s)  \\
      & \quad   -R_0(s)-\sum_{\substack{j=1 \\ j {\rm :odd}}}^N R_{j}(s) +J_{N+\varepsilon}(s). \notag
\end{align}
Here $R_0(s)=-\left(-\frac12\right)^k \zeta(s)$ has a pole of order 1 at $s=1$ and 
$R_j(s)=-(-1)^j \binom{s+j-1}{j}\zeta(s+j)\zeta^k(-j)$
is holomorphic. On the other hand, $R_{-1}(s)$ is a linear combination of products of 
$\Gamma^{(j_1)}(s-1)$ and $\zeta^{(j_2)}(s-1)$ with $j_1+j_2 \leq k-1$,
so $R_{-1}(s)$ has poles of order at most $k$ at integers less than or equal to $2$. 
The last integral $J_{N+\varepsilon}(s)$ in \eqref{D-hyouji}
can be continued as a holomorphic function to $\sigma >1-N-\varepsilon$. Since $N$ is arbitrary, 
$D_k(s)$ can be continued to the whole plane. Moreover $D_k(s)$ converges absolutely 
in the half plane $\sigma>\frac{2k}{k+1}$. Hence $s=2$ is not a pole of $D_k(s)$. This means 
that in the right-hand side of \eqref{D-hyouji} the first term and the term $-R_{-1}(s)$   
have principal parts at $s=2$ that cancel out.


\section{Expressions for $D_3(s)$ and $D_4(s)$}

In what follows, we consider the cases $k=3$ and $k=4$. As has already been shown,  
$D_k(s)$ has an analytic continuation to the whole $s$-plane (with some poles, but no other type of singularity). 
The relevant expression \eqref{D-hyouji} is, however, not adequate for the proof of Theorem \ref{newthm}.
We therefore reconsider the process of analytic continuation of $D_k(s)$ in a different manner. 

The polynomials $P_k(u)$ defined by \eqref{Pk} are given explicitly  by 
\begin{align}  
P_3(u)&=\frac12 u^2+(3\gamma_0-1) u+3\gamma_1+3\gamma_0^2-3\gamma_0+1  \label{shukou} \\
\intertext{and} 
P_4(u)&=\frac16 u^3+\left(2\gamma_0-\frac12\right)u^2+\left(4\gamma_1+6\gamma_0^2-4\gamma_0+1\right)u \label{shukou4} \\
&\quad {}+4\left(\gamma_2+3\gamma_0\gamma_1+\gamma_0^3\right)-\left(4\gamma_1+6\gamma_0^2\right)+4\gamma_0-1. \nonumber
\end{align}
Here $\gamma_j$ are the coefficients of the Laurent expansion of $\zeta(s)$ at $s=1$:
$\zeta(s)=1/(s-1)+\sum_{j=0}^{\infty}\gamma_j(s-1)^j$. 
As mentioned in the Introduction, the bound $\Delta_k(x) \ll x^{1/2+\varepsilon}$ is the only estimate for $|\Delta_k(x)|$ 
that we shall use, although a slightly stronger bound is known to hold in  the case $k=3$. 

Let $\eta$ be any small fixed positive number less than $1/9$. In \eqref{start} we move the line of integration 
from $(c)$ to $(-1/2-2\eta)$. Then we get 
\begin{equation*}
S_k=\zeta^k(s)-R_{-1}(s)+J_{-1/2-2\eta}(s),
\end{equation*}
where $J_{-1/2-2\eta}(s)$ and  $R_{-1}(s)$ are defined by \eqref{start} and \eqref{ryuusu}, respectively.
Recall that 
$$
J_{-1/2-2\eta}(s)=\frac{1}{2\pi i}\int_{(-1/2-2\eta)}\frac{\Gamma(s+z)\Gamma(-z)}{\Gamma(s)}\zeta(s+z)\zeta^k(-z)dz
$$
and
$$
R_{-1}(s)=\Res_{z=-1}\left(\frac{\Gamma(s+z)\Gamma(-z)}{\Gamma(s)}\zeta(s+z)\zeta^k(-z)\right).
$$
The explicit forms of $R_{-1}(s)$ are 
\begin{align} \label{R3}
&R_{-1}(s) \\[1ex]
&=\frac{\Gamma(s-1)}{\Gamma(s)}\left(-\left(3\gamma_1+\frac{\gamma_0^2}{2}+\frac{\pi^2}{12}\right)\zeta(s-1)
        +2\gamma_0\zeta'(s-1)-\frac12\zeta''(s-1)\right) \nonumber \\[1ex]
&\quad +\frac{\Gamma'(s-1)}{\Gamma(s)}\Bigl(2\gamma_0\zeta(s-1)-\zeta'(s-1)\Bigr) 
       -\frac{\Gamma''(s-1)}{\Gamma(s)}\cdot \frac{\zeta(s-1)}{2} \nonumber  
\end{align}
for $k=3$ and 
\begin{align}\label{R4}
R_{-1}(s)
&=\frac{\Gamma(s-1)}{\Gamma(s)}\left\{-\left(4\gamma_2+8\gamma_0\gamma_1-\frac{1}{6}\gamma_0^3+\frac{\pi^2}{4}\gamma_0
-\frac{\zeta(3)}{3}\right)\zeta(s-1) \right.  \\
& \quad \left. +\left(4\gamma_1+\frac52\gamma_0^2+\frac{\pi^2}{12}\right)\zeta'(s-1)-\frac32\gamma_0\zeta''(s-1)
+\frac16\zeta'''(s-1)\right\}  \nonumber \\
&\quad +\frac{\Gamma'(s-1)}{\Gamma(s)}\left\{\left(4\gamma_1+\frac52\gamma_0^2+\frac{\pi^2}{12}\right)\zeta(s-1)
 - 3\gamma_0\zeta'(s-1)+\frac12\zeta''(s-1)\right\} \nonumber \\
& \quad +\frac{\Gamma''(s-1)}{\Gamma(s)}\left\{-\frac32\gamma_0\zeta(s-1)+\frac12\zeta'(s-1)\right\} \nonumber \\
& \quad +\frac{\Gamma'''(s-1)}{\Gamma(s)}\cdot \frac16 \zeta(s-1) \nonumber
\end{align}
for $k=4$. Here we have used the well-known identities $\Gamma''(1)=\gamma_0^2+\frac{\pi^2}{6}$ and 
$\Gamma'''(1)=-2\zeta(3)-\gamma_0^3-\frac12\pi^2 \gamma_0.$

As a function of $s$, $J_{-1/2-2\eta}(s)$ is holomorphic in the half plane $\sigma>3/2+2\eta$. 
When we continue this function to the region $\sigma<3/2+2\eta$, the residue at $z=1-s$ 
$$
\Res\limits_{z=1-s}\left(\frac{\Gamma(s+z)\Gamma(-z)}{\Gamma(s)}\zeta(s+z)\zeta^k(-z)\right)=
\frac{\zeta^k(s-1)}{s-1}
$$ 
appears. Thus we get the expression
\begin{equation} \label{newhyouji}
S_k=\zeta^k(s)-R_{-1}(s)+\frac{\zeta^k(s-1)}{s-1}+J_{-1/2-2\eta}(s).
\end{equation}
In fact, the expression \eqref{newhyouji} is valid in the range $1/2+2\eta < \sigma < 3/2+2\eta$,
since $J_{-1/2-2\eta}(s)$ is holomorphic there. 
For convenience, define the function $F(s)$ by
\begin{equation} \label{F-def}
F(s)=-\sum_{j=0}^{k-1}a_j(-1)^j\zeta^{(j)}(s-1)+J_{-1/2-2\eta}(s).
\end{equation} 
Then, by \eqref{hyouji-1} and \eqref{newhyouji}, we have 
\begin{align} \label{D3-hyouji-0}
D_k(s)&=F(s) +\zeta^k(s)- R_{-1}(s)+\frac{\zeta^k(s-1)}{s-1} 
\end{align}
for all $s$ in the strip  $1/2+2\eta < \sigma < 3/2+2\eta.$

 
\section{The sum of $\Delta_k(n)$} 

The Dirichlet series $D_k(s)$ converges absolutely in $\sigma>3/2$ (in fact in $\sigma>3/2-\delta_0$ for $k=3$) 
and has an analytic continuation to the whole complex plane, with poles at $s=1, 0$ 
and the negative integers (but no other singularities).  
To estimate the sum $\sum_{n \leq x}\Delta_k(n)$ we apply Perron's formula to the Dirichlet series $D_k(s)$. 
Since the exact values of the abscissae of absolute convergence are not known 
we shall apply \cite[Theorem II.2.3]{Te}.

\begin{lemma}[Tenenbaum \cite{Te}]  \label{Perron}
Let $f(s)=\sum_{n \geq 1} a_n/n^s$ be a Dirichlet series with a finite abscissa of absolute convergence $\sigma_a$.
For $\kappa>\max\{0,\sigma_a\}, \ T \geq 1$ and $x \geq 1$, we have
\begin{align} \label{perron-0}
\sum_{n \leq x} a_n =\frac{1}{2\pi i} \int_{\kappa-iT}^{\kappa+iT} f(s) \frac{x^s}{s}ds+
O\left(x^{\kappa}\sum_{n =1}^{\infty} \frac{|a_n|}{n^{\kappa}(1+T|\log(x/n)|)}\right).
\end{align} 
\end{lemma}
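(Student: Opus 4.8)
The plan is to establish this effective truncated Perron formula in the classical way: rewrite the contour integral as a series of standard Mellin kernels and estimate each kernel uniformly in its argument. Write $\mathbf 1$ for the indicator function of $[1,\infty)$ and put
\[
K(y,T):=\frac{1}{2\pi i}\int_{\kappa-iT}^{\kappa+iT}\frac{y^{s}}{s}\,ds\qquad(y>0),
\]
so that, since $\mathbf 1(x/n)=1$ exactly when $n\le x$, the quantity to be bounded is $\sum_{n=1}^{\infty}a_n\bigl(\mathbf 1(x/n)-K(x/n,T)\bigr)$. First I would justify interchanging the series defining $f(s)$ with the integral over the compact segment $[\kappa-iT,\kappa+iT]$: since $\kappa>\sigma_a$ the series $\sum_n a_n n^{-s}$ converges uniformly on that segment (Weierstrass $M$-test, using $\sum_n|a_n|n^{-\kappa}<\infty$), while $x^{s}/s$ is bounded and continuous there because $\kappa>0$; hence
\[
\frac{1}{2\pi i}\int_{\kappa-iT}^{\kappa+iT}f(s)\frac{x^{s}}{s}\,ds=\sum_{n=1}^{\infty}a_{n}\,K(x/n,T),
\]
the right-hand side being absolutely convergent once the kernel bound below is available. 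Thus the whole lemma reduces to the estimate
\[
\bigl|\mathbf 1(y)-K(y,T)\bigr|\ \ll\ \frac{y^{\kappa}}{1+T|\log y|}\qquad(y>0,\ T\ge1),
\]
with an absolute implied constant.

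I would prove this kernel estimate by distinguishing whether $y$ is close to $1$. If $T|\log y|\ge1$, I shift the segment of integration to a distant vertical line: for $0<y<1$ rightwards (no pole of $y^{s}/s$ is crossed, and the vertical integral at $\Re s=U$ vanishes as $U\to+\infty$ since $y^{U}\to0$), and for $y>1$ leftwards (crossing the simple pole at $s=0$, which contributes the residue $1=\mathbf 1(y)$, and again the far vertical integral vanishes). In either case the two horizontal connecting segments at heights $\pm T$ are each bounded trivially by $\tfrac{1}{2\pi T}\int y^{\sigma}\,d\sigma$ over the relevant half-line, i.e.\ by $\tfrac{1}{2\pi}\,y^{\kappa}/(T|\log y|)$, which is $\ll y^{\kappa}/(1+T|\log y|)$ here because $T|\log y|\ge1$. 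If instead $T|\log y|<1$ — so $y\in[e^{-1/T},e^{1/T}]$, a bounded interval on which $y^{\kappa}\asymp1$ — I would compare $K(y,T)$ with the explicitly computable value
\[
K(1,T)=\frac{1}{\pi}\arctan\!\bigl(T/\kappa\bigr)\in\bigl(0,\tfrac12\bigr),
\]
estimating $|K(y,T)-K(1,T)|\le\tfrac{1}{2\pi}\int_{-T}^{T}|y^{\kappa+it}-1|\,|\kappa+it|^{-1}\,dt$ via $|y^{s}-1|\le|s|\,|\log y|\max(1,y^{\kappa})$; since $T|\log y|<1$ this difference is $O(y^{\kappa})$, so $|\mathbf 1(y)-K(y,T)|\ll y^{\kappa}\ll y^{\kappa}/(1+T|\log y|)$ in this range too. (Alternatively, for $0<y<1$ one may shift only to the finite line $\Re s=\kappa+T$, bounding the horizontal pieces by $\tfrac{1}{2\pi T}\cdot T\cdot y^{\kappa}$ and the new vertical piece by $O(y^{\kappa})$.) Combining the two cases — and noting that at $y=1$ itself the estimate is just $|\,1-K(1,T)|\le1$ — proves the kernel bound.

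Inserting it into the identity of the first paragraph gives
\[
\Bigl|\sum_{n\le x}a_n-\frac{1}{2\pi i}\int_{\kappa-iT}^{\kappa+iT}f(s)\frac{x^{s}}{s}\,ds\Bigr|\ \ll\ \sum_{n=1}^{\infty}|a_n|\,\frac{(x/n)^{\kappa}}{1+T|\log(x/n)|}=x^{\kappa}\sum_{n=1}^{\infty}\frac{|a_n|}{n^{\kappa}\bigl(1+T|\log(x/n)|\bigr)},
\]
and the final series converges because $\kappa>\sigma_a$ forces $\sum_n|a_n|n^{-\kappa}<\infty$ (the factor $(1+T|\log(x/n)|)^{-1}\le1$ only helping). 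This is exactly the asserted bound.

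The one genuinely delicate point is the uniformity of the kernel estimate near $y=1$, that is, for those $n$ with $x/n$ close to $x$: there the contour-shift argument loses its decay and the crude bound $\tfrac{1}{2\pi}\int_{-T}^{T}|s|^{-1}\,dt$ grows like $\log T$, so one must exploit the exact value of $K(1,T)$ (or a carefully truncated shift) to recover a bound of the correct shape $y^{\kappa}/(1+T|\log y|)$. This is the familiar $\min\bigl(1,(T|\log y|)^{-1}\bigr)$ behaviour of the truncated Perron kernel, and it is precisely what makes the ``$1+$'' in the denominator of the error term both correct and necessary; the remaining steps are routine.
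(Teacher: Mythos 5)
The paper does not prove Lemma~\ref{Perron}: it simply quotes \cite[Theorem~II.2.3]{Te} and uses it as a black box, so there is no in-paper argument to compare yours against. Your proposal is the classical proof of the effective (truncated) Perron formula, and it is the same argument that Tenenbaum gives: interchange $\sum_n$ and $\int$ over the compact segment (justified by normal convergence, since $\kappa>\sigma_a$ and $\kappa>0$), reduce to the pointwise kernel bound
\[
\bigl|\mathbf{1}_{[1,\infty)}(y) - K(y,T)\bigr| \ll \frac{y^{\kappa}}{1+T|\log y|},
\]
and prove that bound by contour shifting (right for $y<1$, left for $y>1$, picking up the residue $1$ at $s=0$) when $T|\log y|\geq 1$, and by a separate argument when $y$ is near $1$. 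The steps you give are all correct; one small remark is that the ``compare with $K(1,T)$'' route gives, in the subcase $y<1$, $T|\log y|<1$, only a $\kappa$-dependent constant (since there $y^{\kappa}\asymp_{\kappa}1$ rather than $\asymp 1$), whereas the parenthetical alternative you mention --- truncating the shift at $\Re s=\kappa+T$ --- yields the absolute constant that Tenenbaum's statement actually carries; as you already supply that alternative, this is a matter of emphasis rather than a gap. In short: the proposal is a correct and essentially standard proof of the cited lemma.
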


\medskip

Suppose that $|a_n| \ll n^{\a}$ for all $n$. Then, by dividing the sum into the cases 
$n < \frac{x}{2}, \frac{x}{2} \leq n \leq x-2, x-2<n<x+2, x+2\leq n \le 2x $ and $2x<n$, 
it is easy to see that the error term in \eqref{perron-0} is bounded by
\begin{equation} \label{perron-1}
O\left(\frac{x^{\kappa}}{T}+\frac{x^{\a+1}}{T}\log x + x^{\a}\right).
\end{equation}

In our case $f(s)=D_k(s)$, we can take $\a=1/2+\eta/2, \kappa=3/2+\eta$  (where $\eta$  is a 
fixed small positive number), and from \eqref{perron-0} and \eqref{perron-1} we get
\begin{equation*}
\sum_{n \leq x}\Delta_k(n) = \frac{1}{2\pi i} \int_{\kappa-iT}^{\kappa+iT}D_k(s)\frac{x^s}{s}ds
          +O\left(x^{1/2+\eta}+\frac{x^{3/2+\eta}}{T}\right)
\end{equation*}
for any $x \geq 1 $ and $T \geq 1$. 
Substituting the expression \eqref{D3-hyouji-0} into the above formula we get
\begin{align}  
\sum_{n \leq x}\Delta_k(n) 
&=U_1+U_2-U_3+U_4 +O\left(x^{1/2+\eta}+\frac{x^{3/2+\eta}}{T}\right), \label{Deltakwa}
\end{align}
where 
\begin{align*}
U_1 =\frac{1}{2\pi i} \int_{\kappa-iT}^{\kappa+iT}F(s)\frac{x^s}{s}ds, &\quad  U_3 =\frac{1}{2\pi i} \int_{\kappa-iT}^{\kappa+iT}R_{-1}(s)\frac{x^s}{s}ds, \\[1ex]
U_2 =\frac{1}{2\pi i} \int_{\kappa-iT}^{\kappa+iT}\zeta^k(s)\frac{x^s}{s}ds, &\quad U_4=\frac{1}{2\pi i} \int_{\kappa-iT}^{\kappa+iT}\frac{\zeta^k(s-1)}{s-1}\frac{x^s}{s}ds. \\[1ex]
\end{align*}


\section{Estimation of $U_j$ \ ($1 \leq j \leq 4$)}  

\subsection{Analytic continuation and estimation of $J_{-1/2-2\eta}(s)$}  \label{sec5}

In order to estimate $U_1$ we need the analytic continuation of $F(s)$, 
especially of  $J_{-1/2-2\eta}(s)$, into a wider region. To make the process clear, we temporary write 
\begin{equation*} 
F_1(s)=J_{-1/2-2\eta}(s),  
\end{equation*}
which is holomorphic in $1/2+2\eta<\sigma<3/2+2\eta$.
Let $1/2+2\eta<\sigma < 1/2+4\eta$. 
Recalling (from \eqref{start}) the definition of $J_c(s)$ as a contour integral, 
we move the line of integration  from $(-1/2-2\eta)$ to $(-1/2-4\eta)$. 
Then the residue at $z=-s$ 
$$
\Res_{z=-s} \left(\frac{\Gamma(s+z)\Gamma(-z)}{\Gamma(s)}\zeta(s+z)\zeta^k(-z)\right)=-\frac12\zeta^k(s)
$$
appears. Therefore $F_1(s)=J_{-1/2-4\eta}(s)-\frac12\zeta^k(s)$ in the vertical strip $1/2+2\eta<\sigma<1/2+4\eta$.
But the function $J_{-1/2-4\eta}(s)$ here can be continued as a holomorphic function to the left 
beyond the line $\sigma=1/2+2\eta$. In fact, it is holomorphic in $-1/2+4\eta<\sigma< 1/2+4\eta$. 
Hence, by the identity theorem for analytic functions, we see that 
\begin{equation*} 
F_1(s)=J_{-1/2-4\eta}(s)-\frac12\zeta^k(s), \quad  \left(-\frac12+4\eta < \sigma < \frac12+4\eta\right).
\end{equation*}
In this way we get the following lemma. 
\begin{lemma} \label{lemma3}
Let 
$$
F_0(s)=-\sum_{j=0}^{k-1}a_j(-1)^j\zeta^{(j)}(s-1).
$$ Then we have
\begin{align*}
F(s)=\begin{cases}  \d F_0(s)+J_{-1/2-2\eta}(s) & 
\text{if \; $\frac12+2\eta<\sigma<\frac32+2\eta$} , \\ 
                    \d F_0(s)+J_{-1/2-4\eta}(s)-\frac12\zeta^k(s) & 
\text{if \; $-\frac12+4\eta<\sigma<\frac12+4\eta$} .   
     \end{cases} 
\end{align*}
\end{lemma}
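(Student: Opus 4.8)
The plan is to deduce the first case directly from the definition \eqref{F-def} and to prove the second case by one further contour shift. With $F_0(s)=-\sum_{j=0}^{k-1}a_j(-1)^j\zeta^{(j)}(s-1)$, the defining relation \eqref{F-def} reads $F(s)=F_0(s)+J_{-1/2-2\eta}(s)$, and it is valid precisely on the strip $\tfrac12+2\eta<\sigma<\tfrac32+2\eta$, where (as recalled after \eqref{newhyouji}) the contour integral $J_{-1/2-2\eta}(s)$ represents a holomorphic function of $s$. So everything comes down to establishing
\[
J_{-1/2-2\eta}(s)=J_{-1/2-4\eta}(s)-\tfrac12\,\zeta^k(s)\qquad\bigl(-\tfrac12+4\eta<\sigma<\tfrac12+4\eta\bigr)
\]
and then adding $F_0(s)$ to both sides.

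To get this identity I would first work in the overlap strip $\tfrac12+2\eta<\sigma<\tfrac12+4\eta$ and shift the line of integration in the contour integral defining $J_{-1/2-2\eta}(s)$ from $\Re z=-\tfrac12-2\eta$ leftwards to $\Re z=-\tfrac12-4\eta$. As a meromorphic function of $z$, the integrand $\Gamma(s+z)\Gamma(-z)\zeta(s+z)\zeta^k(-z)/\Gamma(s)$ is singular only at the poles of $\Gamma(s+z)$ (at $z=-s,-s-1,\dots$), of $\Gamma(-z)$ (at $z=0,1,2,\dots$), of $\zeta(s+z)$ (at $z=1-s$) and of $\zeta^k(-z)$ (at $z=-1$). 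For $s$ in the overlap strip the only one of these with real part strictly between $-\tfrac12-4\eta$ and $-\tfrac12-2\eta$ is the simple pole of $\Gamma(s+z)$ at $z=-s$, whose residue equals $\zeta(0)\zeta^k(s)=-\tfrac12\zeta^k(s)$. Because $\Gamma(s+z)\Gamma(-z)$ decays exponentially as $|\Im z|\to\infty$, uniformly for $\Re z$ in a bounded interval, whereas $\zeta(s+z)\zeta^k(-z)$ grows only polynomially there, the horizontal connecting segments contribute nothing in the limit, and the residue theorem yields the displayed identity on $\tfrac12+2\eta<\sigma<\tfrac12+4\eta$.

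It then remains to observe that the right-hand side is in fact holomorphic on the wider strip $-\tfrac12+4\eta<\sigma<\tfrac12+4\eta$: $\zeta^k(s)$ obviously is, and $J_{-1/2-4\eta}(s)$ is, because as $\sigma$ runs over that interval none of the poles listed above meets the fixed line $\Re z=-\tfrac12-4\eta$ --- the pole $z=-s$ stays strictly to its right, $z=-s-1$ strictly to its left, the poles $z=0,1,2,\dots$ to its right, and $z=-1$ to its left (the last using $-1<-\tfrac12-4\eta$, i.e.\ $\eta<\tfrac18$, which is guaranteed by $\eta<\tfrac19$) --- so differentiation under the integral sign is legitimate, again by the same decay estimate. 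Hence both sides of the displayed identity are holomorphic on $-\tfrac12+4\eta<\sigma<\tfrac12+4\eta$ and coincide on the nonempty substrip $\tfrac12+2\eta<\sigma<\tfrac12+4\eta$; the identity theorem extends the identity to the whole strip, and adding $F_0(s)$ gives the second case. I expect the only mildly delicate points to be the bookkeeping of which $z$-poles lie on which side of each vertical line as the parameter $s$ moves --- so that exactly the term $-\tfrac12\zeta^k(s)$, and nothing else, is produced --- and the entirely standard justification, via Stirling's formula for $\Gamma$ and a convexity bound for $\zeta$, of the vanishing of the horizontal segments and of the holomorphy of the shifted integral; neither should pose a genuine obstacle.
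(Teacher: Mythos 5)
Your proposal is correct and follows essentially the same route as the paper: shift the contour in $J_{-1/2-2\eta}(s)$ from $\Re z=-\tfrac12-2\eta$ to $\Re z=-\tfrac12-4\eta$ while $s$ is in the overlap strip $\tfrac12+2\eta<\sigma<\tfrac12+4\eta$, pick up exactly the residue $-\tfrac12\zeta^k(s)$ at $z=-s$, note that $J_{-1/2-4\eta}(s)$ is holomorphic on the wider strip $-\tfrac12+4\eta<\sigma<\tfrac12+4\eta$, and extend by the identity theorem. You spell out the pole bookkeeping and the decay justification somewhat more explicitly than the paper does, but the underlying argument is the same.
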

We note that Lemma \ref{lemma3} provides an analytic continuation of $F(s)$ as a holomorphic function over the strip
$-1/2+4\eta<\sigma < 3/2+2\eta$.

Now we are going to derive a bound for the function $F(s)$.  First we show that
\begin{equation} \label{order1}
F\left(\frac32+\eta+it\right) \ll 1. 
\end{equation}
To see this, it is enough to show that the other four terms in \eqref{D3-hyouji-0} are bounded
on the line $\sigma=3/2+\eta$. 
It is clear that $D_k(s) \ll 1$ there, since the series in \eqref{def-Dk} converges absolutely 
for $\sigma > \frac32$ (when $k\in\{ 3,4\}$). Recalling the expressions in \eqref{R3} and \eqref{R4}, 
and noting that $\Gamma^{(j)}(s-1)/\Gamma(s) \ll (\log |t|)^j/|t|$ 
and $\zeta^{(j)}(s-1) \ll |t|^{1/6}$ ($0 \leq j \leq 3$, $\sigma=3/2+\eta$, $|t|\geq 1$),  
we see that $R_{-1}(s) \ll 1$ on the same line. 
The terms $\zeta^k(s)$ and $\zeta^k(s-1)/(s-1)$  in \eqref{D3-hyouji-0} are also bounded   
on this line (for $k=3$ or $4$). Hence we get \eqref{order1}.
We got this without using the integral form of $J_{-1/2-2\eta}(s)$. 

Next we shall estimate $J_{-1/2-2\eta}(s)$ on the line $\sigma=1/2+3\eta$.
We may assume $t \geq 1$. The form of $J_{-1/2-2\eta}(s)$ is
\begin{align} \label{Jform}
J_{-1/2-2\eta}(s)&=\frac{1}{2\pi \Gamma(s)}\int_{-\infty}^{\infty}\Gamma(\eta+i(t+v))\Gamma\left(\frac12+2\eta-iv\right)  \\
& \hspace{2.5cm} \times \zeta(\eta+i(t+v))\zeta^k\left(\frac12+2\eta-iv\right)dv. \notag
\end{align} 
From Stirling's formula and \cite[Theorem 1.9 and (7.58)]{I1}, we see that 
\begin{align*}
\Gamma(\eta+i(t+v)) & \ll (1+|t+v|)^{\eta-1/2}e^{-\frac{\pi}{2}|t+v|}, \\
\Gamma\left(\frac12+2\eta-iv\right) & \ll (1+|v|)^{2\eta} e^{-\frac{\pi}{2}|v|}, \\
\zeta(\eta+i(t+v)) & \ll (1+|t+v|)^{\frac12-\frac23\eta}, \\[1ex]
\Gamma\left(\frac12+3\eta+it \right) & \sim \sqrt{2\pi} t^{3\eta}e^{-\frac{\pi}{2}t}.
\end{align*}
To evaluate the integral in \eqref{Jform} we first split it up, writing  
$$
\int_{-\infty}^{\infty}=\int_{-\infty}^{-t}+ \int_{-t}^{0}+\int_0^{t}+\int_t^{\infty}=:I_1+I_2+I_3+I_4, 
$$ 
say, 
and then make use of the fact that, by Titchmarsh \cite[Theorem 7.5]{T} and (when $k=3$) H\"older's inequality, 
one has $\int_0^T |\zeta(\sigma+it)|^kdt \ll T$ for any fixed $\sigma\in(\frac12,1)$ and $k=3$ or $4$. 

(i) The integral $I_2$.  Since $-t \leq v \leq 0$, we have
\begin{align*}
I_2 & \ll \int_{-t}^0 (1+t+v)^{\frac13\eta} (1+|v|)^{2\eta} e^{-\frac{\pi}{2}t}\left|\zeta\left(\frac12+2\eta-iv\right)\right|^k dv \\
    & \ll t^{1+\frac73\eta}e^{-\frac{\pi}{2}t}.
\end{align*}

(ii) The integral 
$I_1$. Since $v\leq -t$, we have, replacing $v$ by $-v$,
\begin{align*}
I_1 \ll e^{\frac{\pi}{2}t} \int_{t}^{\infty}(1+v-t)^{\frac13\eta} v^{2\eta} e^{-\pi v}
\left|\zeta\left(\frac12+2\eta+iv\right)\right|^k dv.
\end{align*}
Let 
$$
H(T)=\int_T^{2T}(1+v-t)^{\frac13\eta}v^{2\eta}e^{-\pi v}\left|\zeta\left(\frac12+2\eta+iv\right)\right|^k dv
$$
for $T \geq t$. We have easily that
$$
H(T) \leq C (2T)^{1+\frac73\eta}e^{-\pi T}
$$
with an absolute constant $C>0$. Hence
\begin{align*}
I_1 & \ll e^{\frac{\pi}{2}t}\sum_{j=0}^{\infty}H(2^j t) \\
    & \ll e^{\frac{\pi}{2}t} (2t)^{1+\frac73\eta}\left(e^{-\pi t}+2^{1+\frac73\eta}e^{-2\pi t}
              +2^{2(1+\frac73\eta)}e^{-2^2\pi t}+\cdots\right) \\
    & \ll t^{1+\frac73\eta}e^{-\frac{\pi}{2} t},
\end{align*}
where the implied constants are absolute.

(iii) The integral 
$I_3$. We have
\begin{align*}
I_3 & \ll e^{-\frac{\pi}{2}t} \int_0^t (1+t+v)^{\frac13\eta}(1+v)^{2\eta} e^{-\pi v}\left|
\zeta\left(\frac12+2\eta-iv\right)\right|^k dv \\[1ex]
& \ll t^{1+\frac13\eta}e^{-\frac{\pi}{2}t}.
\end{align*}

(iv) The integral 
$I_4$. Similarly to (ii) above, we have
\begin{align*}
I_4  & \ll e^{-\frac{\pi}{2}t} \int_t^{\infty}(1+t+v)^{\frac13\eta}v^{2\eta}e^{-\pi v}
          \left|\zeta\left(\frac12+2\eta+iv\right)\right|^k dv \\
     & \ll e^{-\frac{\pi}{2}t} \int_t^{\infty}(1+2v)^{\frac13\eta}v^{2\eta}e^{-\pi v}
          \left|\zeta\left(\frac12+2\eta+iv\right)\right|^k dv  \\
     & \ll  t^{1+\frac73\eta}e^{-\frac{3\pi}{2}t}.
\end{align*}
From these estimates we have for $|t|\gg 1$
\begin{equation} \label{order2}
J_{-1/2-2\eta}\left(\frac12+3\eta+it\right) \ll |t|^{1-\frac23\eta}.
\end{equation}
As for the function $F(s)$, 
it follows from Lemma \ref{lemma3}, \eqref{order2} and the bounds 
$ \zeta^{(j)}\left(-\frac12+3\eta+it \right) \ll |t|^{1-2\eta}$ ($0 \leq j \leq 3$) 
that we have 
\begin{equation} \label{order-F}
F\left(\frac12+3\eta+it\right) \ll |t|^{1-\frac23\eta}.
\end{equation}
Using \eqref{order1}, \eqref{order-F} and the convexity theorem, we find that   
\begin{equation} \label{order-F-kukan}
F(\sigma +it) \ll |t|^{-\frac{1-2\eta/3}{1-2\eta}(\sigma-3/2-\eta)}
\end{equation}
for $1/2+3\eta \leq \sigma \leq 3/2+\eta$. 

For a bound for $J_{-1/2-4\eta}(s)$ on the line $\sigma=\frac12+3\eta$, 
we note the relation 
$$
J_{-1/2-4\eta}\left(\frac12+3\eta+it\right)=J_{-1/2-2\eta}\left(\frac12+3\eta+it\right)+\frac12\zeta^k
\left(\frac12+3\eta+it\right) 
$$
implicit in Lemma~\ref{lemma3}: since $\zeta^k(1/2+3\eta+it) \ll (1+|t|)^{k/6} \ll 1+|t|^{2/3}$, 
it follows by this relation and \eqref{order2} that 
\begin{equation} \label{order3}
J_{-1/2-4\eta}\left(\frac12+3\eta+it\right) \ll |t|^{1-\frac23 \eta}.
\end{equation}

Lastly, we require a bound for $J_{-1/2-4\eta}(s)$ on the line $\sigma=-\frac12+5\eta$.  
Stirling's formula and a pointwise estimate for  $\zeta(s)$ (see \cite[(5.1.1)]{T}) give: 
\begin{align*}
\Gamma(-1+\eta+i(t+v)) & \ll (1+|t+v|)^{\eta-3/2}e^{-\frac{\pi}{2}|t+v|}, \\
\Gamma\left(\frac12+4\eta-iv\right) & \ll (1+|v|)^{4\eta} e^{-\frac{\pi}{2}|v|}, \\
\zeta(-1+\eta+i(t+v)) & \ll (1+|t+v|)^{\frac32-\eta}, \\[1ex]
\Gamma\left(-\frac12+5\eta+it \right) & \sim \sqrt{2\pi} t^{-1+5\eta}e^{-\frac{\pi}{2}t}
\end{align*}
for $t \geq 1$ and any real $v$. Hence,  
by an argument similar to that which gave us \eqref{order2}, we find that 
\begin{equation} \label{order4}
J_{-1/2-4\eta}\left(-\frac12+5\eta+it\right) \ll |t|^{2-\eta}. 
\end{equation}
By \eqref{order3}, \eqref{order4} and the convexity theorem, we find that one certainly has 
\begin{equation} \label{order5}
J_{-1/2-4\eta}\left(\sigma+it\right) \ll |t|^{-\sigma+3/2+4\eta}
\end{equation}
for $-1/2+5\eta \leq \sigma \leq 1/2+3\eta$ and $|t|\gg 1$.

\subsection{Estimation of $U_1$} \label{U1} 

As in the previous section let $\eta$ be any fixed small positive number. Let $P_j$ and $P_j'$ denote the points given by 
$P_0=3/2+\eta-iT, \, P_0'=3/2+\eta+iT$, $P_1=1/2+3\eta-iT, \, P_1'=1/2+3\eta+iT$, 
$P_2=-1/2+5\eta-iT, \, P_2'=-1/2+5\eta+iT$. 
Let $H_j$ and $H_j'$ denote the horizontal line segments given by 
$$
H_1: P_0 \to P_1, \ H_2: P_1 \to P_2  
$$
and
$$
H_1': P_1' \to P_0', \ H_2': P_2' \to P_1'. 
$$
Furthermore let $L_j$ denote the vertical line segments given by
$$
L_1: P_1 \to P_1', \ L_2:P_2 \to P_2'. 
$$
By Cauchy's theorem we have
\begin{equation} \label{U1-henkei-1}
U_1=\frac{1}{2\pi i}\left(\int_{H_1}+\int_{L_1}+\int_{H_1'}\right) F(s)\frac{x^s}{s}ds.  
\end{equation}
By Lemma \ref{lemma3}  the middle term on the right-hand side of \eqref{U1-henkei-1} becomes
\begin{align} \label{U1-henkei-2}
\frac{1}{2\pi i}\int_{L_1}F(s)\frac{x^s}{s}ds
&=\frac{1}{2\pi i}\int_{L_1}\left(F_0(s)+J_{-1/2-4\eta}(s)\right)\frac{x^s}{s}ds   \\
 & \quad {} -\frac{1}{2\pi i}\int_{L_1}\frac12\,\zeta^k(s)\frac{x^s}{s}ds. \notag
\end{align}
By Cauchy's theorem again the first term on the right-hand side of \eqref{U1-henkei-2} becomes
\begin{align} \label{U1-henkei-3}
&\frac{1}{2\pi i}\int_{L_1}\left(F_0(s)+J_{-1/2-4\eta}(s)\right)\frac{x^s}{s}ds  \\ 
& =\frac{1}{2\pi i}\left(\int_{H_2}+\int_{L_2}+\int_{H_2'}\right) \left(F_0(s)+J_{-1/2-4\eta}(s)\right)\frac{x^s}{s}ds +F_0(0). \notag
\end{align}
We should note that $J_{-1/2-4\eta}(0)=0$. Collecting \eqref{U1-henkei-1}--\eqref{U1-henkei-3} we get the expression
\begin{align*} 
U_1&=\frac{1}{2\pi i}\left(\int_{H_1}+\int_{H_1'}\right)F(s)\frac{x^s}{s}ds \nonumber \\
  &\quad  +\frac{1}{2\pi i}\left(\int_{H_2}+\int_{H_2'}\right)\left(F_0(s)+J_{-1/2-4\eta}(s)\right)\frac{x^s}{s}ds \nonumber \\
  &\quad +\frac{1}{2\pi i}  \int_{L_2} \left(F_0(s)+J_{-1/2-4\eta}(s)\right)\frac{x^s}{s}ds  \nonumber \\
  &\quad -\frac{1}{2\pi i} \int_{L_1}\frac12 \, \zeta^k(s)\frac{x^s}{s}ds +F_0(0). 
\end{align*}

We now estimate these integrals. For the integrals along the horizontal line segments 
$H_j, H_j'$, we see by \eqref{order-F-kukan} and \eqref{order5} that 
\begin{align*}
&\left(\int_{H_1}+\int_{H'_1}\right) F(s) \frac{x^s}{s} ds 
 \ll \frac{x^{3/2+\eta}}{T}+x^{1/2+3\eta}T^{-2\eta/3}, \\ 
&\left(\int_{H_2}+\int_{H'_2}\right) \left(F_0(s)+J_{-1/2-4\eta}(s)\right)\frac{x^s}{s}ds 
 \ll  x^{1/2+3\eta}T^{\eta}+ x^{-1/2+5\eta}T^{1-\eta}.
\end{align*}
Here we have used the bound $\zeta^{(j)}(\sigma-1+iT) \ll T^{3/2-\sigma+\eta}$ for 
$-1/2+5\eta \leq \sigma \leq 1/2+3\eta, \ 0 \leq j \leq 3$.
Using \eqref{order4} we get the bound
\begin{align*}
\int_{L_2}\left(F_0(s)+J_{-1/2-4\eta}(s)\right)\frac{x^s}{s}ds  \ll x^{-1/2+5\eta}T^{2-\eta}.
\end{align*}
As for the integral over $L_1$, we first recall (see Section \ref{sec5}) that when $k=3$ or $4$ 
one has $\int_0^U |\zeta(\sigma+it)|^kdt \ll U$   
for any fixed $\sigma\in(\frac12,1)$.   
It follows that 
\begin{align*} 
\left|\int_{L_1}\zeta^k(s)\frac{x^s}{s}ds\right|
 &\leq 2 x^{\frac12 + 3\eta} \int_0^T \left( {\textstyle\frac14} + t^2\right)^{-\frac12} 
\left|\zeta\left( {\textstyle\frac12}+3\eta +it\right)\right|^k dt \\
 &\ll x^{\frac12 + 3\eta}\left( 1 + \log_2(T) 
\cdot\max_{1\leq U\leq T} \frac{1}{U}\int_{U/2}^U 
\left|\zeta\left( {\textstyle\frac12}+3\eta +it\right)\right|^k dt\right)  \\ 
 &\ll x^{\frac12 + 3\eta} (1+\log T) \ll x^{\frac12 + 3\eta} T^{\eta} . 
\end{align*}

Combining these estimates and noting that $F_0(0) \ll 1$, and that 
$$
\frac{x^{\frac32+\eta}T^{-1}+x^{-\frac12+5\eta}T^{2-\eta}}{2} \geq 
\sqrt{x^{\frac32+\eta}T^{-1}\cdot x^{-\frac12+5\eta}T^{2-\eta}} 
\geq x^{\frac12+3\eta}T^{\eta}\geq 1 , 
$$
we get 
\begin{equation}  \label{U1-hyouka}
U_1 \ll \frac{x^{3/2+\eta}}{T}+x^{-1/2+5\eta}T^{2-\eta}.
\end{equation}

\subsection{Estimation of $U_2$}

By Perron's formula and \eqref{D34-bound} we get 
\begin{align}
U_2&=\sum_{n \leq x}d_k(n)+O\left(x^{\eta}\right)+O\left(\frac{x^{3/2+\eta}}{T}\right) \label{U2-hyouka} \\
  &=xP_k(\log x)+\Delta_k(x)+O\left(x^{\eta}\right)+O\left(\frac{x^{3/2+\eta}}{T}\right) \notag \\
  &=xP_k(\log x)+O\left(x^{1/2+\eta}\right)+O\left(\frac{x^{3/2+\eta}}{T}\right).  \notag
\end{align}

\subsection{Estimation of $U_3$} 

We move the line of integration to $L_1$. Noting that 
$$
\frac{\Gamma'(s-1)}{\Gamma(s-1)} \ll \log |t|, \ \ \frac{\Gamma''(s-1)}{\Gamma(s-1)} \ll \log^2 |t|,
\ \ \frac{\Gamma'''(s-1)}{\Gamma(s-1)} \ll \log^3 |t|
$$
(on the relevant horizontal line segments),  
and that $R_{-1}(s)$ has a pole of order $k$ at $s=1$, we get
\begin{align*}
U_3&=
\int_{1/2+3\eta-iT}^{1/2+3\eta+iT}R_{-1}(s)\frac{x^s}{s}ds \\ 
 &{\phantom{=}}+\Res_{s=1} \left(R_{-1}(s)\frac{x^s}{s}\right) 
+O\left(\frac{x^{3/2+\eta}T^{1/6}}{T^{2}}\right) 
+O\left(\frac{x^{1/2+3\eta}}{T^{1+2\eta}}\right).
\end{align*}
For the first term on the right-hand side of this equality we have the estimate 
$$
\int_{1/2+3\eta-iT}^{1/2+3\eta+iT}R_{-1}(s)\frac{x^s}{s}ds 
\ll x^{1/2+3\eta} \int_{-T}^T \frac{(\log(2+|t|))^{k-1}}{(1+|t|)^{1+2\eta}} dt  
\ll x^{1/2+3\eta}. 
$$
On the other hand, from \eqref{R3} and \eqref{R4}, a direct calculation shows 
\begin{multline*} 
\Res_{s=1} \left(R_{-1}(s)\frac{x^s}{s}\right) \\ 
=-\zeta(0)x \left(\frac12\log^2 x+\Bigl(3\gamma_0-1\Bigr)\log x 
+3\gamma_0^2-3\gamma_0+3\gamma_1+1 \right) 
\end{multline*} 
for $k=3$, and 
\begin{multline*}   
\Res_{s=1}\left(R_{-1}(s)\frac{x^s}{s}\right) \\ 
\begin{split} 
 &=-\zeta(0)x  
\biggl(\frac16 \log^3x 
+\Bigl(2\gamma_0-\frac12\Bigr)\log^2x 
+\Bigl(4\gamma_1+6\gamma_0^2-4\gamma_0+1\Bigr)\log x  \\ 
 &{\phantom{=}} +4\gamma_2-4\gamma_1+12\gamma_0\gamma_1+4\gamma_0^3-6\gamma_0^2+4\gamma_0-1\biggr)
\end{split} 
\end{multline*} 
for $k=4$. 
Comparing this with \eqref{shukou} and \eqref{shukou4}, we see that  
\begin{equation} \label{res-rel}
\Res_{s=1}\left(R_{-1}(s)\frac{x^s}{s}\right)=\frac12\Res_{s=1}\left(\zeta^k(s)\frac{x^s}{s}\right)=\frac12xP_k(\log x)
\end{equation}
for $k=3$ and $4$. So we get
\begin{equation} \label{U3-hyouka}
U_3 = \frac12x P_k(\log x) +O\left(\frac{x^{3/2+\eta}}{T^{11/6}}\right)+O\left(x^{1/2+3\eta}\right). 
\end{equation}

\begin{remark} \label{Rem1}
We note that the property \eqref{res-rel} is valid for all $k\geq 1$. In fact, for $r$ such that 
$0<r<\frac18$, $R_{-1}(s)$ has an expression
\begin{equation*}
R_{-1}(s)=\frac{1}{2\pi i}\int_{C(-1, r)}\frac{\Gamma(s+z)\Gamma(-z)}{\Gamma(s)}\zeta(s+z)\zeta^k(-z)dz,
\end{equation*}
where $C(w, r)$ denotes the circular path with positive direction of radius $r$ and the center $w$. The above 
expression gives an analytic continuation of $R_{-1}(s)$ into $\Im(s)> r$.  
Suppose that 
$|s-(1+2 r i)|<r$. In the annulus bounded by $C(-1,r)$ and $C(-1,3r)$, $\Gamma(s+z)\Gamma(-z)\zeta(s+z)\zeta^k(-z)$ as a 
function of $z$ has only one pole (which is at $z=-s$), therefore we have
\begin{align}
&R_{-1}(s)-\frac{1}{2\pi i} \int_{C(-1, 3r)}\frac{\Gamma(s+z)\Gamma(-z)}{\Gamma(s)}\zeta(s+z)\zeta^k(-z)dz \label{res-rel-1} \\
&=-\Res_{z=-s}\left(\frac{\Gamma(s+z)\Gamma(-z)}{\Gamma(s)}\zeta(s+z)\zeta^k(-z)\right) =\frac12 \zeta^k(s). \notag 
\end{align}
Since  the integral in \eqref{res-rel-1} is holomorphic in $|s-1|<3r$, \eqref{res-rel-1} gives an analytic continuation of 
$R_{-1}(s)$ into the punctured disk $0<|s-1|<3r$. From this observation and \eqref{Pk} we get
$$
\Res_{s=1}\left(R_{-1}(s)\frac{x^s}{s}\right)=\frac{1}{2} \Res_{s=1}\left(\zeta^k(s)\frac{x^s}{s}\right)=\frac{1}{2} xP_k(\log x)
$$
since $x^s/s$ is holomorphic in $|s-1|<3r$. This proves \eqref{res-rel}.
\end{remark}

\subsection{Estimation of $U_4$} 

Recall that 
\begin{equation} \label{U4sekibun}
U_4=\frac{1}{2\pi i} \int_{\kappa-iT}^{\kappa+iT}\frac{\zeta^k(s-1)}{s-1}\frac{x^s}{s}ds 
=\frac{1}{2\pi i} \int_{1/2+\eta-iT}^{1/2+\eta+iT}\zeta^k(s)\frac{x^{s+1}}{s(s+1)}ds. 
\end{equation}
In the last integral we shift the line of integration to $\sigma=-\delta <0$. 
Then, by Cauchy's theorem and pointwise bounds for $|\zeta(s)|$, we obtain 
\begin{align} \label{U4}
U_4&=\frac{1}{2\pi i} \int_{-\delta-iT}^{-\delta+iT}\zeta^k(s)\frac{x^{s+1}}{s(s+1)}ds +\left(-\frac12\right)^k x  \\
& \qquad  +O\left(\frac{x^{3/2+\eta}}{T^{2-\frac{k}{3}(\frac12-\eta)}}\right)
        +O\left(\frac{x^{1-\delta}}{T^{2-k(\frac12+\delta)}}\right). \notag
\end{align}
For the integral on the right-hand side of \eqref{U4} we have following lemma.
\begin{lemma}  \label{voronoitype}
Let $k \geq 3$ be a positive integer. Let $0<\delta \leq \frac16$. 
Suppose that either $k=3$ and $\delta=\frac16$, or 
else $k \geq 4$ and $\delta$ is small. 
Then for $x \geq 1$ and $N \in \mathbb{N}$, we have
\begin{multline*}
\frac{1}{2\pi i} \int_{-\delta-iT}^{-\delta+iT}\zeta^k(s)\frac{x^{s+1}}{s(s+1)}ds  \\ 
\begin{split} 
 &= 
 \frac{x^{\frac32-\frac{3}{2k}}}{2\pi^2 \sqrt{k} }\sum_{n \leq N}\frac{d_k(n)}{n^{\frac12+\frac{3}{2k}}}
\cos\left(2\pi k (nx)^{\frac{1}{k}}+\frac{\pi(k+3)}{4}\right) \\ 
&{\phantom{=}} +O\left(x^{3/2-2/k}N^{1/2-2/k+\delta}\right),
\end{split}  
\end{multline*} 
where $T=2\pi\left(x(N+\frac12)\right)^{1/k}$.
\end{lemma}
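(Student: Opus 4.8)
The plan is to pass to the dual side via the functional equation $\zeta^k(s)=\chi(s)^k\zeta^k(1-s)$, where $\chi(s)=2^s\pi^{s-1}\sin(\pi s/2)\Gamma(1-s)$. On the line $\sigma=-\delta$ one has $\Re(1-s)=1+\delta>1$, so $\zeta^k(1-s)=\sum_{n=1}^{\infty}d_k(n)n^{s-1}$ converges absolutely there; moreover Stirling's formula gives $|\chi(-\delta+it)^k|\asymp(1+|t|)^{k(1/2+\delta)}$ and $|s(s+1)|^{-1}\ll_{\delta}(1+|t|)^{-2}$ on this line. Since $\sum_n d_k(n)n^{-1}(nx)^{-\delta}\int_{-T}^{T}(1+|t|)^{k(1/2+\delta)-2}\,dt<\infty$, we may interchange summation and integration to obtain
\[
\frac{1}{2\pi i}\int_{-\delta-iT}^{-\delta+iT}\zeta^k(s)\,\frac{x^{s+1}}{s(s+1)}\,ds
=x\sum_{n=1}^{\infty}\frac{d_k(n)}{n}\,\Psi_n,\qquad
\Psi_n:=\frac{1}{2\pi i}\int_{-\delta-iT}^{-\delta+iT}\chi(s)^k\,\frac{(nx)^{s}}{s(s+1)}\,ds .
\]
Writing $G_n(s)=\chi(s)^k(nx)^{s}/\bigl(s(s+1)\bigr)$ and using $\overline{G_n(\bar s)}=G_n(s)$, we have $\Psi_n=\tfrac1\pi\Re\!\bigl(\int_0^{T}G_n(-\delta+it)\,dt\bigr)$; inserting the Stirling asymptotics of $\chi(-\delta+it)^k$ exhibits $G_n(-\delta+it)$ as $A_n(t)e^{i\phi_n(t)}$ plus lower-order terms, with
\[
A_n(t)=-\Bigl(\tfrac{t}{2\pi}\Bigr)^{k(\frac12+\delta)}\frac{(nx)^{-\delta}}{t^{2}}\bigl(1+O(1/t)\bigr),\qquad
\phi_n(t)=t\log(nx)-kt\log\tfrac{t}{2\pi}+kt+\tfrac{k\pi}{4},
\]
so that $\phi_n'(t)=k\log(t_n/t)$ with $t_n:=2\pi(nx)^{1/k}$, and $\phi_n''(t)=-k/t<0$. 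The truncation level $T=2\pi\bigl(x(N+\tfrac12)\bigr)^{1/k}$ is chosen precisely so that the stationary point $t_n$ lies in $(0,T)$ exactly when $n\le N$.

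For $n\le N$ I would apply a standard quantitative stationary-phase estimate near $t_n$ (of the kind in Titchmarsh \cite{T} or Graham--Kolesnik \cite{GK}), treating the complementary ranges by the first-derivative test, where $\phi_n'$ is monotone and bounded away from $0$ apart from a neighbourhood of $t=T$ (there $|\phi_n'(T)|=\log\bigl((N+\tfrac12)/n\bigr)$, which for $n$ close to $N$ is $\asymp(N+\tfrac12-n)/N$ and hence small). The saddle-point contribution to $\int_0^{T}G_n(-\delta+it)\,dt$ is
\[
A_n(t_n)\,\sqrt{\frac{2\pi}{|\phi_n''(t_n)|}}\;e^{\,i\phi_n(t_n)-i\pi/4}
=-\frac{(nx)^{1/2-3/(2k)}}{2\pi\sqrt{k}}\;e^{\,i(2\pi k(nx)^{1/k}+(k-1)\pi/4)} ,
\]
and taking $\tfrac1\pi\Re$, multiplying by $x\,d_k(n)/n$, and applying $\cos\!\bigl(\theta+\tfrac{(k-1)\pi}{4}\bigr)=-\cos\!\bigl(\theta+\tfrac{(k+3)\pi}{4}\bigr)$ turns the $n$-th term into exactly $\dfrac{x^{3/2-3/(2k)}}{2\pi^{2}\sqrt{k}}\,\dfrac{d_k(n)}{n^{1/2+3/(2k)}}\cos\!\bigl(2\pi k(nx)^{1/k}+\tfrac{(k+3)\pi}{4}\bigr)$, the claimed main term.

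For $n>N$ there is no stationary point in $[0,T]$, and since $\phi_n'$ is monotone there with $\phi_n'(t)\ge\log\bigl(n/(N+\tfrac12)\bigr)=:\lambda_n>0$, the first-derivative test yields $|\Psi_n|\ll\lambda_n^{-1}\max_{[0,T]}|A_n(t)|\ll\lambda_n^{-1}(nx)^{-\delta}(1+T)^{\max(0,\,k(\frac12+\delta)-2)}$; summing, and using $\lambda_n\asymp(n-N)/N$ for $n$ just above $N$ (so that the small values of $\lambda_n$ cost only $\sum_{m\le N}m^{-1}\ll\log N$), $\lambda_n\gg\log(n/N)$ for $n\ge2N$, and $T\asymp(Nx)^{1/k}$, one gets $x\sum_{n>N}d_k(n)n^{-1}|\Psi_n|\ll x^{3/2-2/k}N^{1/2-2/k+\varepsilon}$. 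It then remains to collect everything: the saddle-point main terms for $n\le N$ give the asserted sum, while every remaining contribution — the stationary-phase remainders, the non-stationary ranges for $n\le N$ (the portion near $t=0$ giving $\ll x^{1-\delta}\sum_n d_k(n)n^{-1-\delta}\ll x^{1-\delta}$, the portion near $t=T$ estimated exactly as in the tail, with a $1/|\phi_n'(T)|$ loss), and the tail $n>N$ — is $\ll x^{1-\delta}+x^{3/2-2/k}N^{1/2-2/k+\varepsilon}$. The hypotheses on $k$ and $\delta$ are precisely what makes this fit inside $O\bigl(x^{3/2-2/k}N^{1/2-2/k+\delta}\bigr)$: one needs $x^{1-\delta}\ll x^{3/2-2/k}$, which forces $\delta=\tfrac16$ when $k=3$, and for $k\ge4$ one takes $\delta$ small and fixed so that $N^{\delta}$ absorbs the logarithmic and $N^{\varepsilon}$ losses. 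I expect this bookkeeping — especially the near-endpoint analysis at $t=T$, needed for $n$ on both sides of $N$ — to be the main technical work, rather than any single conceptual hurdle; the functional-equation reduction and the stationary-phase computation itself are routine.
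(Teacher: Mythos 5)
Your proposal is correct in substance and follows the same fundamental route as the paper: the functional equation converts $\zeta^k(s)\,x^{s+1}/\bigl(s(s+1)\bigr)$ into a sum over $n$ of oscillatory integrals, the phase $\phi_n(t)$ has a saddle point $t_n=2\pi(nx)^{1/k}$ inside $(0,T)$ precisely for $n\le N$, and evaluating each saddle gives the asserted main term (your sign-and-phase bookkeeping, including the identity $\cos\bigl(\theta+\tfrac{(k-1)\pi}{4}\bigr)=-\cos\bigl(\theta+\tfrac{(k+3)\pi}{4}\bigr)$, is right). The genuine difference is in how the $n\le N$ case is organized. You propose to apply a quantitative stationary-phase estimate directly on the truncated line $[-\delta-iT,-\delta+iT]$, and you correctly flag the near-endpoint analysis at $t=T$ (i.e.\ $n$ near $N$) as the main technical work. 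The paper sidesteps this: after the change of variable $s\mapsto 1-s$, it extends the truncated segment, for each $n\le N$, to an infinite polygonal path $C_0$ running to $\pm i\infty$; the "return'' pieces $C_1,\ldots,C_4$ are then bounded cleanly by the first-derivative test (this is the source of the error $O(x^{3/2-2/k}N^{1/2-2/k+\delta})$), while the integral over $C_0$ is a fixed function $K(y)$, $y=2\pi(nx)^{1/k}$, whose large-$y$ asymptotics are found by Tong's method of deforming the contour through the complex saddle. With that device the saddle lies strictly inside the contour for every $n\le N$, so no delicate endpoint analysis arises; in your on-the-line version you would instead need a form of Atkinson's saddle-point lemma with explicit endpoint terms $\ll |G(T)|/|\phi_n'(T)|$ and would have to verify that their sum over $n\le N$ stays within $O(x^{3/2-2/k}N^{1/2-2/k+\delta})$ --- which does work out, but is exactly the step your sketch leaves implicit. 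Both routes buy the same error bound; the paper's contour extension makes the bookkeeping more mechanical, at the cost of one extra application of Cauchy's theorem.
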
 

The proof of Lemma \ref{voronoitype} will be given in Section 7.


\section{The proofs of Theorem \ref{newthm} and Corollary \ref{cor1}} 

\begin{proof}[Proof of Theorem \ref{newthm}] 
For $k=3$, $T=2\pi(x(N+1/2))^{1/3}$, it follows by Lemma~\ref{voronoitype}  
and the case $\delta = \frac16$ of \eqref{U4} that   
\begin{align*} 
U_4 &=\frac{x}{2\pi^2\sqrt{3}}\sum_{n \leq N} \frac{d_3(n)}{n}\cos\left(6\pi(nx)^{\frac13}+\frac{3\pi}{2}\right) \\  
 &{\phantom{=}} -\frac18x +O\left(\frac{x^{3/2+\eta}}{T^{3/2+\eta}}\right) +O\left(x^{5/6}\right).  
\end{align*} 
This, together with \eqref{Deltakwa}, \eqref{U1-hyouka}, \eqref{U2-hyouka} and \eqref{U3-hyouka}, gives us  
\begin{align*}
\sum_{n \leq x} \Delta_3(n) &=\frac12 x P_3(\log x) 
+\frac{x}{2\pi^2\sqrt{3}}\sum_{n \leq N} \frac{d_3(n)}{n}\cos\left(6\pi(nx)^{\frac13}+\frac{3\pi}{2}\right)-\frac18x  \nonumber \\
& \quad   {}+O(x^{5/6})+O\left(\frac{x^{3/2+\eta}}{T}\right)+O\left(x^{-1/2+5\eta}T^{2-\eta}\right). 
\end{align*}
Substituting here $T=2\pi(x(N+1/2))^{1/3}$, and simplifying a little, we get the result \eqref{thm2-3} of Theorem \ref{newthm}. 

In the case $k=4$, $T=2\pi(x(N+1/2))^{1/4}$,  Lemma \ref{voronoitype} and \eqref{U4} give 
\begin{equation*} 
U_4= \frac{x^{9/8}}{4\pi^2}\sum_{n \leq N} \frac{d_4(n)}{n^{7/8}}\cos\left(8\pi(nx)^{\frac14}+\frac{7\pi}{4}\right)
+ O(xN^{\delta}) + O\left(\frac{x^{3/2+\eta}}{T^{4(1+\eta)/3}}\right), 
\end{equation*} 
where $\delta$ is any small positive number. Similarly to the case $k=3$ we obtain
\begin{align*}
\sum_{n \leq x} \Delta_4(n) & = \frac12 x P_4(\log x)  + \frac{x^{9/8}}{4\pi^2}\sum_{n \leq N} 
\frac{d_4(n)}{n^{7/8}}\cos\left(8\pi(nx)^{\frac14}+\frac{7\pi}{4}\right) \\ 
& \quad + O\left(\frac{x^{3/2+\eta}}{T}\right)+O\left(x^{-1/2+5\eta}T^{2-\eta}\right)+O\left(xN^{\delta}\right).
\end{align*}
Substituting $T=2\pi(x(N+1/2))^{1/4}$, 
taking $\delta=\eta\in(0,\frac19)$  and simplifying a little, we get the result \eqref{thm2-4} of Theorem \ref{newthm}. 
\end{proof} 

\begin{proof}[Proof of Corollary \ref{cor1}]   
For the first part (where, in effect, $k=3$), we have the trivial upper bound $O(\log^3 N)$ 
for the absolute value of the sum over $n$ in  \eqref{thm2-3}.  
If we take $N=[x]$ in \eqref{thm2-3}, then the $O$-term there becomes $O(x^{5/6+5\eta})$,
so that by choosing $\eta<1/30$ we get
\begin{align*}
\sum_{n \leq x} \Delta_3(n) \ll x \log^3 x 
\end{align*}
(the polynomial $P_3(u)$ being of degree $2$). 
This proves \eqref{cor1-3}.

For the second part (where, in effect, $k=4$), we have the trivial upper bound $O(N^{1/8+\varepsilon})$ 
for the absolute value of the sum over $n$ in \eqref{thm2-4}.  
So by taking $N=[x^{1/3}]$ and $0<\eta< \varepsilon/5$ we obtain 
$$
\sum_{n \leq x} \Delta_4(n) 
\ll x^{9/8}N^{1/8+\varepsilon} + x^{5/4+5\eta}N^{-1/4} 
\ll x^{7/6+\varepsilon}.
$$
This proves \eqref{cor1-4}. 
\end{proof} 

\begin{remark}  \label{Rem-msq1}
Let $V_k(x,N)$  ($k=3, 4$) denote 
the second terms on the right-hand sides of 
\eqref{thm2-3} and 
\eqref{thm2-4}, respectively, 
so that we have:
\begin{align}
V_3(x,N)&=\frac{x}{2\pi^2\sqrt{3}}\sum_{n \leq N} \frac{d_3(n)}{n}\cos\left(6\pi(nx)^{\frac13}+\frac{3\pi}{2}\right),  \label{V3} \\
V_4(x,N)&=\frac{x^{9/8}}{4\pi^2}\sum_{n \leq N}\frac{d_4(n)}{n^{7/8}}\cos\left(8\pi(nx)^{\frac14}+\frac{7\pi}{4}\right). \label{V4}
\end{align}
Using standard methods we can prove easily that 
\begin{align}
\int_1^X V_3^2(x,N)dx&=\frac{1}{72\pi^4}\left(\sum_{n=1}^{\infty}\frac{d_3^2(n)}{n^2}\right)X^{3}
+O(X^3N^{\varepsilon - 1})+O(X^{8/3}), \label{msq-3} \\
\int_1^X V_4^2(x,N)dx&=\frac{1}{104\pi^4}\left(\sum_{n=1}^{\infty}\frac{d_4^2(n)}{n^{\frac74}}\right)X^{\frac{13}{4}}
+O\!\left(X^{\frac{13}{4}}N^{\varepsilon - \frac34}\right) +O(X^3N^{\varepsilon}). \label{msq-4}
\end{align}
\end{remark}


\section{Proof of Lemma \ref{voronoitype}}  
We follow the standard method of deriving the truncated Vorono\"{i}-type formula for the integral
$I=\frac{1}{2\pi i}\int_{-\delta-iT}^{-\delta+iT}\zeta^k(s)\frac{x^{s+1}}{s(s+1)}ds$.
See Titchmarsh \cite[Chapter XII]{T}. 
Using the functional equation of the Riemann zeta-function and expanding $\zeta(1-s)$  as a Dirichlet series we get
\begin{align}
I&=\frac{x}{\pi^k}\sum_{n=1}^{\infty}\frac{d_k(n)}{n}\frac{1}{2\pi i}\int_{-\delta-iT}^{-\delta+iT}
\left(\Gamma(1-s)\sin\frac{\pi s}{2}\right)^k \frac{(2^k \pi^k nx)^{s}}{s(s+1)}ds \nonumber \\
&=2^k x^2 \sum_{n=1}^{\infty}d_k(n)\frac{1}{2\pi i}\int_{1+\delta-iT}^{1+\delta+iT}
\left(\Gamma(s)\cos\frac{\pi s}{2}\right)^k \frac{(2^k \pi^k nx)^{-s}}{(1-s)(2-s)}ds. \nonumber 
\end{align} 
Let $N$ be a positive integer and let $T=2\pi\left(x\left(N+1/2\right)\right)^{1/k}.$
Let $C_1$ and $C_2$ denote the lines joining the points $-i\infty, \ -iT, \ 1+\delta-iT$ in this order respectively,
and let $C_3$ and $C_4$ denote the lines joining the points $1+\delta+iT, \ iT, \ i\infty$ in this order respectively,
and finally let $C_0$ denote the polygonal path joining the above six points by straight lines. 
We then have
\begin{align*}
I&=2^kx^2\sum_{n \leq N}d_k(n)\frac{1}{2\pi i}\left\{\int_{C_0}-\sum_{j=1}^4 \int_{C_j} \right\} \\[1ex]
&\quad +2^kx^2\sum_{ n >N}d_k(n)\frac{1}{2\pi i}\left\{\int_{1+\delta-iT}^{1+\delta-i}+
\int_{1+\delta-i}^{1+\delta+i}+\int_{1+\delta+i}^{1+\delta+iT}\right\} \\[1ex]
&=2^kx^2\sum_{n \leq N}d_k(n)\frac{1}{2\pi i}
\int_{C_0}\left(\Gamma(s)\cos\frac{\pi s}{2}\right)^k \frac{(2^k \pi^k nx)^{-s}}{(1-s)(2-s)}ds \\[1ex]
&\quad + J_1+J_2+\cdots + J_7,
\end{align*}
say. Since $J_1=\bar{J_4}, J_2=\bar{J_3}, J_5=\bar{J_7}$ we note that 
$$
\sum_{l=1}^7 J_l=J_6+2\Re(J_3+J_4+J_7).
$$

First we consider $J_6$. Since $|\int_{1+\delta-i}^{1+\delta+i}| \ll (nx)^{-(1+\delta)}$, we have
$$
J_6 \ll x^2 \sum_{n >N}d_k(n)(nx)^{-(1+\delta)} \ll x^{1-\delta}N^{-\delta} (\log N)^{k-1}.
$$
As for $J_7$, Stirling's formula gives
\begin{align*}
&\left(\Gamma(s)\cos\frac{\pi s}{2}\right)^k \frac{(2^k \pi^k nx)^{-s}}{(1-s)(2-s)} \\
&=-\left(\frac{\pi}{2}\right)^{k/2}e^{-\frac{\pi i k}{4}}(2^k\pi^knx)^{-\sigma}t^{k(\sigma-1/2)-2}
e^{iF(t)}\left(1+O\left(\frac1t\right)\right),
\end{align*}
where 
$$
F(t)=k\left(t\log\frac{t}{2\pi}-t-t\log((nx)^{1/k})\right).
$$
We have here $k(\frac12 + \delta) - 2\geq 0$ and 
$$
F'(t)=-k\log\frac{2\pi(nx)^{1/k}}{t} \leq -k \log\frac{2\pi(nx)^{1/k}}{T} =-\log \frac{n}{N+1/2} , 
$$
and so, by the first derivative test \cite[Lemma~4.3]{T}, we find that for $n\geq N+1$ one has: 
\begin{align*}
&\int_{1+\delta+i}^{1+\delta+iT} \left(\Gamma(s)\cos\frac{\pi s}{2}\right)^k \frac{(2^k \pi^k nx)^{-s}}{(1-s)(2-s)}ds \\
& \hspace{2cm}   \ll (nx)^{-(1+\delta)}\left(\frac{T^{k(1/2+\delta)-2}}{\log \frac{n}{N+1/2}} +O(r_k(T))\right),
\end{align*}
where, by the choice of $\delta$, $r_3(T)=\log T$ and $r_k(T)=T^{k(1/2+\delta)-2}$ for $k \geq 4$.
Hence
$$
J_7 \ll x^{1-\delta}T^{k(1/2+\delta)-2}\sum_{n>N}\frac{d_k(n)}{n^{1+\delta}}\frac{1}{\log\frac{n}{N+1/2}}
+x^{1-\delta}N^{-\delta}(1+\log N)^{k-1}\, r_k(T).
$$
Therefore, noting that $d_k(n) \ll n^{\varepsilon}$, and that $\log\frac{n}{N+1/2} \gg 1$ for $n >2N$, while 
$\log \frac{n}{N+1/2} \gg \frac{n-N}{N}$ for $N+1 \leq n \leq 2N$, we find that
\begin{align*}
J_7 & \ll x^{1-\delta}T^{k(1/2+\delta)-2}  
      \ll x^{3/2-2/k}N^{1/2-2/k+\delta}. 
\end{align*}
Regarding $J_3$, we observe that 
\begin{align*}
\left|\int_{C_3}\left(\Gamma(s)\cos\frac{\pi s}{2}\right)^k \frac{(2^k \pi^k nx)^{-s}}{(1-s)(2-s)}ds\right| 
& \ll T^{-(k/2+2)}\int_0^{1+\delta}\left( 
\frac{T^k}{2^k \pi^k nx}\right)^{\sigma} d\sigma \\
&= T^{-(k/2+2)}\int_0^{1+\delta}\left(\frac{N+1/2}{n}\right)^{\sigma} d\sigma \\
& \ll  T^{-(k/2+2)}\left(\frac{N}{n}\right)^{1+\delta} 
\end{align*} 
when $n\leq N$. 
Hence 
\begin{align*}
J_3 & \ll x^2 N^{1+\delta}T^{-(k/2+2)}\sum_{n \leq N}\frac{d_k(n)}{n^{1+\delta}} \\
    &  \ll x^{3/2-2/k}N^{1/2-2/k+\delta}.
\end{align*}
As for $J_4$, we note it follows by the first derivative test that 
$$
\int_{iT}^{i\infty} \left(\Gamma(s)\cos\frac{\pi s}{2}\right)^k \frac{(2^k \pi^k nx)^{-s}}{(1-s)(2-s)}ds 
\ll T^{-k/2-2}\left(\frac{1}{\log \frac{N+1/2}{n}}+1 \right)  
$$
when $n\leq N$. Hence 
\begin{align*}
J_4 & \ll x^2 T^{-k/2-2} \sum_{n \leq N}d_k(n)\left(\frac{1}{\log\frac{N+1/2}{n}}+1\right) \\
& \ll x^2 T^{-k/2-2}N^{1+\delta} \\[1ex]
& \ll x^{3/2-2/k}N^{1/2-2/k+\delta}.
\end{align*}

Collecting the above estimates, we obtain  
\begin{align} \label{trV-1}
I&=2^kx^2\sum_{n \leq N}d_k(n)\frac{1}{2\pi i}
\int_{C_0}\left(\Gamma(s)\cos\frac{\pi s}{2}\right)^k \frac{(2^k \pi^k nx)^{-s}}{(1-s)(2-s)}ds  \\[1ex]
&\quad +O\left(x^{3/2-2/k}N^{1/2-2/k+\delta}\right) \nonumber
\end{align}
(note that we take $\delta=1/6$ in the case $k=3$).

Now we consider the first term on the right-hand side of \eqref{trV-1}. 
The treatment here is due to Tong \cite{To}. See also \cite{CTZ}. 
Let $K(y)$ be the function defined by  
\begin{align*}
K(y)&=\frac{1}{2\pi i}\int_{C_0}\left(\Gamma(s)\cos\frac{\pi s}{2}\right)^k \frac{y^{-ks}}{(1-s)(2-s)}ds, \ (y>0).
\end{align*}
To find the asymptotic behaviour of $K(y)$, as $y\rightarrow +\infty$, we transform the path $C_0$ 
so as to have it pass  through the so-called ``saddle point".  
Let $s$ be a point in the upper half plane and introduce new parameters $w$, $\xi$ by
\begin{align*}
s=yw, \ \ w=i(1+\xi). 
\end{align*}
Then, for $t>0$, Stirling's formula gives: 
\begin{align}
&\left(\Gamma(s)\cos\frac{\pi s}{2}\right)^k \frac{y^{-ks}}{(1-s)(2-s)} \label{stirling-w}  \\
&=\left(\frac{\pi}{2}\right)^{k/2} \exp\left(k\Bigl(\bigl(s-\frac12\bigr)\log s - s-\frac{\pi i s}{2}-s\log y\Bigr)\right)
\frac{1}{s^2}\left(1+O\left(\frac{1}{|s|}\right)\right) \notag \\[1ex]
&=\left(\frac{\pi}{2}\right)^{k/2} \exp\left(k\Bigl(yw\big(\log w -1-\frac{\pi i}{2}\bigr)-\frac12\log s\Bigr)\right)
\frac{1}{s^2}\left(1+O\left(\frac{1}{|s|}\right)\right) \notag \\[1ex]
&=\left(\frac{\pi}{2}\right)^{k/2} \exp\Bigl(kyi\bigl((1+\xi)\log(1+\xi)-\xi\bigr)\Bigr)
\frac{e^{-kyi}}{s^{k/2+2}}\left(1+O\left(\frac{1}{|s|}\right)\right). \notag 
\end{align}
Let $u$ be the function of $\xi$ defined by
$$
u=u(\xi)=\Bigl(-ki\bigl((1+\xi)\log(1+\xi)-\xi\bigr)\Bigr)^{1/2}=\sqrt{\frac{k}{2}}e^{-\frac{\pi i}{4}}
\xi \sqrt{\frac{(1+\xi)\log(1+\xi)-\xi}{\xi^2/2}},
$$
where $\sqrt{z}$ denotes the principal branch of the function $z^{1/2}=\exp(\frac12 \log z)$.
Then $u$ is analytic in $|\xi|<1$ and $u'(0)\neq 0$. Thus by inversion we have 
\begin{equation} \label{xitou}
\xi(u)=\sqrt{\frac2k}\, e^{\frac{\pi i}{4}}u+\frac{i}{3k}u^2+O(u^3)
\end{equation}
in a neighbourhood of $u=0$. For a small constant $r>0$ we put
$$
w_1=u_1+iv_1=i(1+\xi(-r)), \ \ w_2=u_2+iv_2=i(1+\xi(r)).
$$
Let ${\cal C}_j\ (j=0,1,2)$ be the paths defined by 
\begin{align*}
{\cal C}_0&=\{s \mid s=yi(1+\xi(u)), \ -r \leq u \leq r\},  \\
{\cal C}_1&=\{s \mid s=y(u_1+iv), \ 0 \leq v \leq v_1\}, \\
{\cal C}_2&=\{s \mid s=y(u_2+iv), \ v \geq v_2\}
\end{align*}
and let ${\cal C}_j '$ be the complex conjugate of ${\cal C}_j$ with the reverse direction. 
Let 
$$
K_j^{+}(y)=\frac{1}{2\pi i}\int_{{\cal C}_j} \left(\Gamma(s)\cos\frac{\pi s}{2}\right)^k 
\frac{y^{-ks}}{(1-s)(2-s)}ds , 
$$
and let $K_j^{-}(y)$ be the corresponding integral over ${\cal C}_j'$. 
Since $K_j^{-}(y)=\overline{K_j^{+}(y)}$, we have, by Cauchy's theorem, 
\begin{equation} \label{Ky}
K(y)= 2 \, \Re \left(\sum_{j=0}^2 K_j^{+}(y) \right)+(-1)^{k+1} y^{-2k}
\end{equation}
for large  $y$.  

The main term in the asymptotic formula for $K(y)$ comes from the integral $K_0^{+}(y)$, in which we have 
\begin{equation} \label{ds-to-du} 
ds=\sqrt{\frac{2}{k}}e^{\frac{\pi i}{4}}yi \left(1+\frac13\sqrt{\frac{2}{k}}
e^{\frac{\pi i}{4}}u+O(u^2)\right)du  
\end{equation} 
(with $u$ running over the interval $[-r,r]$).  
Specifically, it follows from \eqref{stirling-w}, \eqref{xitou} and \eqref{ds-to-du} that 
\begin{align*}
K_0^{+}(y)&=\frac{1}{2\pi}\left(\frac{\pi}{2}\right)^{\frac{k}{2}}\sqrt{\frac{2}{k}}
e^{-i\left(ky+\frac{\pi}{4}(k+3)\right)}y^{-\frac{k}{2}-1}\int_{-r}^{r}\exp(-yu^2)\\
& \quad \times \left(1-\Bigl(\frac{k}{2}+\frac53\Bigr)\sqrt{\frac{2}{k}}e^{\frac{\pi i}{4}}u+O(u^2)\right)
\left(1+O\Bigl(\frac{1}{y}\Bigr)\right)du , 
\end{align*}
and so, observing that  
the above integral equals $y^{-1/2}\bigl(\sqrt{\pi}+O(e^{-yr^2/2})\bigr) +O(y^{-3/2})$, we get 
\begin{align*}
K_0^{+}(y)&=\frac{1}{2\sqrt{k}}\left(\frac{\pi}{2}\right)^{\frac{k}{2}-\frac12}
e^{-i\left(ky+\frac{\pi}{4}(k+3)\right)}y^{-\frac{k}{2}-\frac{3}{2}}+O(y^{-\frac{k}{2}-\frac{5}{2}}). 
\end{align*}

Now we estimate $K_2^{+}(y)$.  Writing $s=yw, \ w=u_2+iv$ in \eqref{stirling-w}, we have
\begin{equation} \label{K2plushyouka} 
K_2^{+}(y) \ll y^{-\frac{k}{2}-1} \int_{v_2}^{\infty} \frac{dv}{|w|^{\frac{k}{2} + 2}}  
\cdot\max_{v \geq v_2} \left| \exp\left(kyw \left(\log w-1-\frac{\pi i}{2}\right)\right) \right| . 
\end{equation}
Since
$$
\frac{d}{dv}\Re\left(w\left(\log w-1-\frac{\pi i}{2}\right)\right)=-\tan^{-1}\frac{-u_2}{v}<0,
$$
the real part of $kyw\left(\log w-1-\frac{\pi i}{2}\right)$ is a decreasing function of $v$ (for $v>0$), 
and so the maximum that appears in \eqref{K2plushyouka} is attained at $v=v_2$, 
where one has $w=u_2+iv_2=w_2=i(1+\xi(r))$ and, therefore, 
\begin{align*}
\Re\left( w\left(\log w-1-\frac{\pi i}{2}\right)\right)
&=-\Im \left(\Bigl(1+\xi(r)\Bigr)\Bigl(\log\bigl(1+\xi(r)\bigr)-1 \Bigr) \right) \\
&=-\Im \Bigl((1+\xi(r))\log\bigl(1+\xi(r)\bigr)-\xi(r)\Bigr) \\
&=-r^2/k.
\end{align*}
We also note that the integral on the right-hand side of \eqref{K2plushyouka} is less than $\int_1^{\infty}v^{-k/2-2}dv$
if $r$ is sufficiently small. Hence we get
$$
K_2^{+}(y) \ll y^{-k/2-1}e^{-r^2 y}. 
$$
It can be shown similarly that 
$$
K_1^{+}(y) \ll y^{-k/2-1}e^{-r^2y}. 
$$

Combining these estimates we get
$$
\sum_{j=0}^2 K_j^{+}(y)=\frac{1}{2\sqrt{k}}\left(\frac{\pi}{2}\right)^{\frac{k}{2}-\frac12}
e^{-i\left(ky+\frac{\pi}{4}(k+3)\right)}y^{-\frac{k}{2}-\frac{3}{2}}+O(y^{-\frac{k}{2}-\frac{5}{2}}), 
$$
and hence, by \eqref{Ky}, 
\begin{equation} \label{Ky-asymp}
K(y)=\frac{1}{\sqrt{k}}\left(\frac{\pi}{2}\right)^{\frac{k}{2}-\frac12}y^{-\frac{k}{2}-\frac32}
\cos\left(ky+\frac{\pi}{4}(k+3)\right)+O\left(y^{-\frac{k}{2}-\frac52}\right).
\end{equation}

From \eqref{trV-1} and \eqref{Ky-asymp} we get
\begin{align*}
I&=2^kx^2\sum_{n \leq N}d_k(n)K(2\pi(nx)^{1/k}) 
+O\left(x^{3/2-2/k}N^{1/2-2/k+\delta}\right) \\
&=\frac{x^{\frac32-\frac{3}{2k}}}{2\pi^2 \sqrt{k} }\sum_{n \leq N}\frac{d_k(n)}{n^{\frac12+\frac{3}{2k}}}
\cos\left(2\pi k (nx)^{\frac{1}{k}}+\frac{\pi(k+3)}{4}\right)   \\
& \quad + O\left(x^{\frac{3}{2}-\frac{5}{2k}} \sum_{n \leq N}\frac{d_k(n)}{n^{\frac12+\frac{5}{2k}}}\right) 
  +O\left(x^{3/2-2/k}N^{1/2-2/k+\delta}\right).
\end{align*}
By hypothesis, we have 
$k\geq 3$ (and $\delta = \frac16$ if $k=3$; $\delta >0$ otherwise).  Hence the last of the above $O$-terms  
absorbs the second last,  so that the stated  result of Lemma \ref{voronoitype} is obtained. \qed

\begin{remark}
For an asymptotic expansion of the integral $K(y)$, see also Chandrasekharan and Narashimhan \cite{CN} and Hafner \cite{Haf}.
\end{remark}


\section{Proof of Theorem \ref{thm3}}

We deduced the results \eqref{cor1-3} and \eqref{cor1-4} of Corollary~\ref{cor1} 
using trivial estimates for the sums over $n$ on the right-hand side of \eqref{thm2-3} and \eqref{thm2-4}. 
By a closer study of such sums (employing estimates from the theory of exponential sums), 
we can improve upon \eqref{cor1-3} and \eqref{cor1-4}. 

First we consider the case $k=3$. Let 
\begin{align} 
S_3&=\sum_{n \leq N} \frac{d_3(n)}{n}\cos\left(6\pi(nx)^{\frac13}+\frac{3\pi}{2}\right) \label{Def-S3}\\ 
&=\sum_{n_1n_2n_3 \leq N} \frac{1}{n_1n_2n_3}\cos\left(6\pi(xn_1n_2n_3)^{\frac13}+\frac{3\pi}{2}\right), \notag 
\end{align} 
where 
$2\leq N\leq x$ (we may assume that $x\geq 2$). 
By symmetry, the part of the sum $S_3$ with $n_3 \leq n_2 \leq n_1$ equals to $\frac16 S_3+O(\log N)$.
One can deduce from this that
\begin{equation} \label{S3}
S_3 \ll \sum_{n_1 \leq N}\sum_{n_2 \leq \min\left(n_1, \frac{N}{n_1}\right)}\frac{1}{n_1n_2}
\left|\sum_{n _3 \leq \min\left(n_2, \frac{N}{n_1n_2}\right)}\frac{e\left(3(xn_1n_2n_3)^{\frac13}\right)}{n_3}\right|,
\end{equation}
where $e(u)=e^{2\pi iu}$. Note that the sum on the right-hand side of \eqref{S3} is certainly greater than or equal to
$\sum_{n_1 \leq N}\frac{1}{n_1} \gg \log N$. 

We put $N'=\min\left(n_2, \frac{N}{n_1n_2}\right)$ for short.
To estimate the sum over $n_3$ on the right-hand side of \eqref{S3}, define 
$$
S(M)=\sum_{M < n_3 \leq 2M} e(f(n_3))
$$
for $M \leq N'/2$, where $f(u)=3(xn_1n_2)^{\frac13} u^{\frac13}$. 
Since $f'(u) =(xn_1n_2)^{\frac13} u^{-\frac23}$, we see that $\frac{F}{2^{2/3}M} \leq f'(u) \leq \frac{F}{M}$ 
with $F=F(M):=(xn_1n_2)^{\frac13}M^{\frac13}$. 
Noting that $F/M =(xn_1n_2)^{\frac13}M^{-\frac23} \geq x^{\frac13}(n_1/n_2)^{\frac13} \geq 
x^{\frac13} > 1$, we have 
\begin{equation*} 
S(M) \ll M^{\lambda}\left(\frac{F}{M}\right)^{\kappa}
\end{equation*}
for any exponent pair $(\kappa, \lambda)\in [0,\frac12]\times[\frac12,1]$. 
If $M^4 > F$, 
then by taking $(\kappa, \lambda)=A^3BA^2B(0,1)=(\frac{1}{42}, \frac{25}{28})$  we get
\begin{equation} \label{sisuuwa-1}
S(M) \ll M^{\lambda+3\kappa}\ll M^{\frac{27}{28}}. 
\end{equation}

Suppose that $M^4 \leq F$. In this case  we apply the following result. 
\begin{lemma} \label{IK}
Let $M\geq 2$, and let $f$ be a real function satisfying 
\begin{equation} \label{lem5-1} 
\alpha^{-j^3}F \leq \frac{x^j}{j!}|f^{(j)}(x)| \leq \alpha^{j^3}F 
\end{equation} 
on $[M,2M]$ for all positive integers $j$, with $F \geq M^4$ and $\alpha \geq 1$. 
Then for all $a,b\in[M,2M]$ we have 
$$
\sum_{a<n \leq b} e(f(n)) \ll \alpha M \exp\left(-2^{-18}\frac{(\log M)^3}{(\log F)^2}\right),
$$ 
where the implicit constant is absolute.
\end{lemma}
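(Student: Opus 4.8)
The plan is to establish this as a Vinogradov-type exponential sum bound; in fact it is essentially Theorem~8.25 of Iwaniec and Kowalski \cite{IK}, from which it could simply be quoted, but I shall outline the argument. The key is to introduce an integer parameter $k$ that governs the degree of polynomial approximation to $f$: since $F\geq M^4$ one may write $F=M^{\kappa}$ with $\kappa\geq 4$, and the right choice is a $k$ comparable to $\kappa=(\log F)/(\log M)$ (so $k\geq 4$), together with a companion parameter $\ell$ comparable to $k^2$ --- the number of variables in the associated Vinogradov system, which may be taken this small in view of the now-sharp form of Vinogradov's mean value theorem (due to Bourgain--Demeter--Guth and to Wooley).

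First I would pass, by a standard averaging over translates, from $S=\sum_{a<n\leq b}e(f(n))$ to an average of shorter sums $\sum_{q\leq Q}e(f(n+q))$ over $n\in(a,b]$, where $Q$ is chosen just small enough that the degree-$k$ Taylor polynomial of $f$ about $n$ approximates $f$ on $[n,n+Q]$ to within a negligible error: by the hypothesis \eqref{lem5-1} the $(k+1)$-st Taylor term has size $\ll F(Q/M)^{k+1}\alpha^{(k+1)^3}$, which forces $Q$ to be of order $M\alpha^{-(k+1)^2}F^{-1/(k+1)}$ (a small power of $M$, since $F$ is a fixed power of $M$). Each short sum then becomes a Weyl sum $W(n)=\sum_{q\leq Q}e(\beta_1(n)q+\cdots+\beta_k(n)q^k)$ with coefficients $\beta_j(n)=f^{(j)}(n)/j!$ whose sizes, and whose spacings as $n$ varies, are controlled up to powers of $\alpha$ by \eqref{lem5-1}; hence $|S|\ll Q^{-1}\sum_{n}|W(n)|+Q$.

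The core of the argument is then to bound $\sum_{n}|W(n)|$. I would apply Hölder's inequality with exponent $2\ell$ to reduce this to the $2\ell$-th power moment $\sum_n|W(n)|^{2\ell}$, and estimate the latter by combining two facts: that the coefficient vectors $(\beta_1(n),\dots,\beta_k(n))$ are sufficiently well-spaced (so only a bounded number of the $n$ can contribute to any one level set of $W$ of the relevant fineness), and that the measure of the set of coefficient vectors on which $|W|$ is large is controlled by Vinogradov's mean value theorem, $\int_{[0,1)^k}|\sum_{q\leq Q}e(\alpha_1q+\cdots+\alpha_kq^k)|^{2\ell}\,d\alpha_1\cdots d\alpha_k\ll Q^{2\ell-\frac12 k(k+1)+\varepsilon}$. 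After a dyadic decomposition of the range of $|W|$, putting these together yields $\sum_n|W(n)|^{2\ell}\ll Q^{2\ell}\cdot(\text{powers of }\alpha,\,M^{\varepsilon})$, hence $|S|\ll M^{1-1/(2\ell)}\cdot(\text{powers of }\alpha,\,M^{\varepsilon})$; optimizing over $k$ converts the saving $M^{-1/(2\ell)}=M^{-c/k^2}$ into $\exp(-c(\log M)^3/(\log F)^2)$, and a careful accounting shows that the accumulated powers of $\alpha$ amount to no more than a single factor $\alpha$.

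I expect the main difficulty to lie in this last stage: running the level-set count through the mean value theorem while keeping honest track of the powers of $\alpha$ produced by the uncertainty in \eqref{lem5-1}, and then balancing $k$, $\ell$ and $Q$ so that the gain from the mean value theorem is not swamped by the losses from Hölder's inequality and from the averaging step --- and, in particular, pinning down the admissible absolute constant, which \cite{IK} records as $2^{-18}$. The remaining ingredients --- the averaging identity, the Taylor reduction, and the mean value theorem itself --- are all standard.
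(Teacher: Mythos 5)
Your proposal is correct and coincides with the paper's treatment: the paper does not prove this lemma but simply quotes it as Theorem~8.25 of Iwaniec and Kowalski, noting only that its proof rests on Vinogradov's mean value theorem, and you do the same (while also supplying a plausible sketch of the underlying argument). One small anachronism in your commentary: the constant $2^{-18}$ in the cited source dates from 2004 and is obtained via the \emph{classical} Vinogradov mean value theorem, not the sharp Bourgain--Demeter--Guth/Wooley form you allude to, but this has no bearing on the correctness of invoking the lemma.
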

This is Iwaniec and Kowalski's \cite[Theorem 8.25]{IK}. 
Its proof is an application of I.M. Vinogradov's mean value theorem.   

\medskip

We apply Lemma \ref{IK} with $f(u)=3(xn_1n_2)^{\frac13}u^{\frac13}$. 
The conditions \eqref{lem5-1} are satisfied with $F=F(M)=(xn_1n_2)^{\frac13}M^{\frac13}$ (as before) 
and $\alpha=3/2$. Hence 
\begin{equation} \label{sisuuwa-2}
S(M) \ll M \exp\left(-2^{-18}\frac{(\log M)^3}{(\log F)^2}\right)
\end{equation} 
when $F\geq M^4$ and $M\geq 2$. Comparing \eqref{sisuuwa-1} and \eqref{sisuuwa-2}, 
one sees that the estimate \eqref{sisuuwa-2} holds regardless of whether or not the condition 
$F\geq M^4$ is satisfied: for the right-hand side of \eqref{sisuuwa-2} is greater than 
$M \exp\left(-2^{-18}\log M\right)=M^{1-\frac{1}{2^{18}}} \geq M^{\frac{27}{28}}$, provided only that $F>M\geq 2$. 

Let $1\leq M_0\leq N'$. Then
\begin{align*}
\sum_{1 \leq n_3 \leq M_0} e(f(n_3)) = \sum_{j=1}^{\infty}S(M_j)
&= \sum_{1\leq j\leq J}S(M_j) +\sum_{j>J}S(M_j)\\ 
&=:S_1+S_2\qquad\text{(say)} , 
\end{align*}
where $J=J(M_0):=\log_2(\sqrt{M_0})\geq 0$ and $M_j:=2^{-j}M_0$. 
Put $F_j=F(M_j)= (xn_1n_2)^{\frac13}M_j^{\frac13}$ for $j\geq 0$. 
When $j\geq 1$ we have both $\log F_j \leq \log F_1 <\frac13\log(xn_1n_2M_0) 
\leq \frac13 \log(xn_1n_2N')\leq\frac13 \log(xN)<\log x$ 
and (see our earlier discussion) $F_j / M_j = F(M_j) / M_j > F(N') / N' \geq x^{\frac13}>1$.  
In cases where $1\leq j\leq J$ we have also $\log(M_j) \geq \frac12 \log M_0\geq \log 2$ 
(note that $J\geq 1$ only if $M_0\geq 4$). Thus 
\begin{equation*} 
S(M_j) \ll M_j \exp\left(-2^{-18} \frac{(\log M_j)^3}{(\log F_j)^2}\right) 
< 2^{-j}M_0 \exp\left(-2^{-21} \frac{(\log M_0)^3}{(\log x)^2}\right) 
\end{equation*} 
when $1\leq j\leq J$. Therefore $ S_1 \ll M_0\exp\left(-2^{-21} (\log M_0)^3 / (\log x)^2\right)$. 
This, when combined with the trivial bound $|S_2|<2M_J=\sqrt{4M_0}$, gives us 
\begin{equation} \label{sisuuwa-3}
\sum_{1 \leq n_3 \leq M_0} e(f(n_3))\ll M_0 \exp\left(-\beta\frac{(\log M_0)^3}{(\log x)^2}\right) 
\end{equation}
where the implicit constant is absolute and $\beta =2^{-21}$.  
In particular, since $x^2\geq 2x > N\geq (N')^2 \geq M_0^2$, we have $x > M_0\geq 1$, so that 
the right-hand side of \eqref{sisuuwa-3} is greater than or equal to $M_0^{1-\beta}\geq\sqrt{M_0}>|S_2|/2$. 

We have shown that \eqref{sisuuwa-3} holds whenever $M_0 \in [1,N']$. We deduce, by partial summation, that  
\begin{align}
\sum_{1 \leq n_3 \leq N'}\frac{e(f(n_3))}{n_3} &\ll 
\exp\left(-\beta\frac{(\log N')^3}{(\log x)^2}\right) +\int_1^{N'} \frac{1}{u} 
\exp\left(-\beta\frac{(\log u)^3}{(\log x)^2}\right) du \label{sisuuwa-4}  \\
& \leq 1+ \int_0^{\infty} \exp\left(-\beta\frac{v^3}{(\log x)^2}\right) dv \notag \\
& \ll (\log x)^{\frac23}. \notag 
\end{align}
It follows from \eqref{Def-S3}, \eqref{S3} and \eqref{sisuuwa-4} that we have
$$
\sum_{n \leq N} \frac{d_3(n)}{n}\cos\left(6\pi(nx)^{\frac13}+\frac{3\pi}{2}\right) 
\ll (\log x)^{\frac23}(\log N)^2 
$$ 
for $x\geq N\geq 2$. Hence, by choosing $N=[x]$ and $\eta = \frac{1}{60}$ (say) in \eqref{thm2-3}, 
we obtain the bound  
$$
\sum_{n \leq x} \Delta_3(n) \ll x(\log x)^{\frac83}, 
$$
which is \eqref{thm3-3}, the first part of Theorem \ref{thm3}. 
\par 
Next we treat the case $k=4$, assuming (as we may) that $x\geq 3$. 
For $x^{1/3} < N \leq x^{1/2}$  we consider the sum 
\begin{equation}\label{s4}
S_4=\sum_{n \leq N} \frac{d_4(n)}{n^{7/8}}\cos\left(8\pi(nx)^{\frac14}+\frac{7\pi}{4}\right). 
\end{equation}
By a  splitting argument, $S_4$ is no greater in modulus than a sum of $O\left((\log x)^4\right)$ sums of the form 
\begin{multline} \label{s42-1} 
\sum_{\substack{m_1m_2m_3m_4 \leq N \\ M_j<m_j\leq 2M_j\ (j=1,\ldots,4)}}
\frac{1}{(m_1m_2m_3m_4)^{7/8}}\, e\left(4(xm_1m_2m_3m_4)^{\frac14}\right) \\  
=: S^{\ast}(M_1,M_2,M_3,M_4) 
\end{multline} 
with $\{ 2+\log_2 M_j : 1\leq j\leq 4\}\subset {\mathbb N}$ and 
$N_0:=\prod_{j=1}^4 M_j < N\leq x^{1/2}$.
Therefore, we have 
\begin{equation}\label{VVV}
S_4 \ll  \left| S^{\ast}(M_1,M_2,M_3,M_4)\right| (\log x)^4 
\ll  \left| S^{\ast}(M_1,M_2,M_3,M_4)\right| x^{\varepsilon}\;,
\end{equation} 
where $(M_1,\ldots,M_4)$ is some $4$-tuple satisfying the conditions just mentioned. 
We may (by symmetry) assume here that $M_4\geq M_1,M_2,M_3\geq 1/2$, 
so that $N_0^{1/4}\leq M_4\leq 8N_0$.  
By positivity and partial summation, we have
\begin{multline} \label{s42-2} 
\left| S^{\ast}(M_1,M_2,M_3,M_4) \right|  \\ 
\quad\leq \frac{1}{(N_0 / M_4)^{7/8}} 
\sum_{\substack{(m_1,m_2,m_3) \\ M_j<m_j\leq 2M_j\ (j=1,2,3)}}  
\max_{M_4 < c \leq d \leq 2M_4}\left|\sum_{c \leq m_4 \leq d} \frac{1}{m_4^{\frac78}} \, e(f(m_4))\right| \\ 
\quad\ll N_0^{\frac18} M_4^{-1}  
\max_{\substack{(m_1,m_2,m_3,a,b) \\ M_j<m_j\leq 2M_j\ (j=1,2,3) \\ M_4 < a \leq b \leq 2M_4}}   
\left|\sum_{a \leq m_4 \leq b} e(f(m_4))\right| , 
\end{multline} 
where $f(u)= f_{x,m_1,m_2,m_3}(u) := 4(xm_1m_2m_3)^{1/4}u^{1/4}$. 
  
Let $(\kappa,\lambda)$ be an arbitrary exponent pair (to be specified later).
For $M_4\leq u\leq 2M_4$ and all $m_1,m_2,m_3$ satisfying the conditions in \eqref{s42-2}, we have  
$$f'(u)=(xm_1m_2m_3)^{\frac14}u^{-\frac34} \asymp V\;,$$ 
where $V=x^{1/4}N_0^{1/4}/M_4 
>N_0^{3/4}/M_4\gg M_4^{-1/4}\gg M_4^{-\lambda/(\kappa +1)}$,  
and so have 
\begin{align} \label{s42-3}
\sum_{a \leq m_4 \leq b} e(f(m_4)) 
\ll V^{\kappa} M_4^{\lambda} + V^{-1} 
\ll V^{\kappa} M_4^{\lambda} = x^{\frac{\kappa}{4}}N_0^{\frac{\kappa}{4}}M_4^{\lambda-\kappa}
\end{align}
when $M_4 < a\leq b\leq 2M_4$.  
We deduce from \eqref{s42-2} and \eqref{s42-3} that 
\begin{equation}\label{Type-I} 
S^{\ast}(M_1,M_2,M_3,M_4) \ll x^{\frac{\kappa}{4}}N_0^{\frac18+\frac{\kappa}{4}}M_4^{\lambda-\kappa-1}\;. 
\end{equation} 
Since $\lambda -\kappa - 1\leq \lambda - 1\leq 0$, 
it follows from \eqref{Type-I} that 
\begin{equation}\label{Type-I-1stUse} 
S^{\ast}(M_1,M_2,M_3,M_4) \ll x^{\frac{\kappa}{4}}N_0^{\frac{\lambda}{2}-\frac{\kappa}{4}-\frac38} 
\quad\text{if $\,M_4\geq \sqrt{N_0}$} .
\end{equation}
\par 
Let 
\begin{equation}\label{UltimateEP} 
(p,q) = \left(\varrho + \varepsilon , \varrho + {\textstyle\frac12} + \varepsilon\right) ,
\end{equation} 
where $\varrho$ is the constant defined in \eqref{Def-rho-constant} and $\varepsilon$ denotes, 
as usual, an arbitrarily small positive constant.  
By \eqref{Def-rho-constant}, \eqref{BourgainEP} and (see \cite[p. 55]{GK}) the convexity 
of the set $\mathscr{G}\subset{\mathbb R}^2$,  
we have $\frac12 - \varrho \geq \frac{29}{84} >0$, and 
$(p,q)$ is an exponent pair if $0<\varepsilon \leq \frac12 - \varrho$ 
(which we may henceforth assume is the case). 
We observe now that $(p,q)$ is certainly a point on the line segment with endpoints $(0,\frac12)$ 
and $(\frac12,1)$, and that the $A$ process of exponent pair theory 
maps exponent pairs that lie on this line segment 
to exponent pairs on the line segment with endpoints $(0,\frac34)$ and $(\frac16,\frac56)$    
(one has 
$\mathscr{G}\ni A(\kappa,\lambda) := (\frac{\kappa}{2\kappa + 2} , \frac12 + \frac{\lambda}{2\kappa +2})$ 
for $(\kappa ,\lambda)\in\mathscr{G}$). 
Therefore, putting now 
\begin{equation}\label{Def-KL-pair}
(P,Q) = A(p,q) = \left( \frac{p}{2p+2} , \frac{p+q+1}{2p+2}\right) 
\end{equation} 
(so that $(P,Q)$ is an exponent pair), we have  
\begin{equation}\label{KL-properties} 
2Q - P = {\textstyle\frac32}\quad\text{and}\quad P+Q \leq 1\;.
\end{equation}
In view of the first part of \eqref{KL-properties}, 
it follows by the special case $(\kappa,\lambda) = (P,Q)$ of  
\eqref{Type-I-1stUse} that 
\begin{equation}\label{Type-I-M_4-large}
S^{\ast}(M_1,M_2,M_3,M_4) \ll x^{\frac{P}{4}} \quad\text{if $\,M_4\geq \sqrt{N_0}$} .
\end{equation} 

With \eqref{Type-I-M_4-large}, our treatment of the case  
where one has $M_4\geq \sqrt{N_0}$ concludes.  
In treating the complementary case, where one has $M_4 < \sqrt{N_0}$,   
we shall use the following result, which is essentially a special case of Baker's \cite[Theorem~2]{B},  
and is also quite similar to \cite[Lemma~4.13]{GK} and the earlier result \cite[Lemma~III]{P-S} 
of Piatetski-Shapiro. 

\begin{lemma}\label{P-S_lemma} 
Let $\gamma$ and $\delta$ be non-zero real constants with $\min\{\gamma ,\delta\} < 1$, 
let $(k,l)$ be a fixed exponent pair, and let 
$(K,\Lambda)=(\frac{k}{2k+2},\frac{k+l+1}{2k+2})=:A(k,l)$. 
Then, for $\rho\in (0,\infty)$, $X,Y\in [1/2,\infty)$, $Z=\rho X^{\gamma} Y^{\delta}$, and all 
complex sequences $\{ b_n\}$ satisfying $|b_n|\leq 1$ ($Y<n\leq 2Y$), 
one has 
\begin{multline*} 
\left( 
\min\left\{ B_{\gamma}(X,Y,Z) , B_{\delta}(Y,X,Z)\right\} 
+\frac{XY}{\sqrt{\min\{ X,Y,Z\}}}\right) 
\log^2 (8XY) \\ 
\gg  \sum_{X<m\leq 2X} \max_{Y\leq u,v\leq 2Y} 
\left| \sum_{u<n\leq v} b_n e\left( \rho m^{\gamma} n^{\delta}\right)\right| 
=: E\;,
\end{multline*} 
say, where $B_{\alpha}(U,V,Z) := Z^K U^{\Lambda} V^{1-K} + \max\{ 0, [\alpha]\}\cdot UV$.
\end{lemma}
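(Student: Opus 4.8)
The plan is to derive Lemma~\ref{P-S_lemma} by the standard route for bounding bilinear exponential sums of Piatetski-Shapiro type, following Baker~\cite{B}, Graham and Kolesnik~\cite{GK} and Piatetski-Shapiro~\cite{P-S}. Since the statement is, up to inessential normalisations, a special case of \cite[Theorem~2]{B}, in practice I would state that theorem in the shape required here and reconcile the two formulations; the paragraphs below indicate the argument that underlies it.

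\emph{Removing the maximum.} For each fixed $m$ with $X<m\le 2X$, one can write the indicator function of $(u,v]\subseteq(Y,2Y]$ as a Fourier integral with kernel of size $\ll\min\{Y,1/\|\beta\|\}$, and deduce that
\[
\max_{Y\le u,v\le 2Y}\left|\sum_{u<n\le v}b_n e\bigl(\rho m^\gamma n^\delta\bigr)\right|
\ll \log(8Y)\cdot\max_{|\beta|\le 1/2}\left|\sum_{Y<n\le 2Y}b_n e\bigl(\rho m^\gamma n^\delta+n\beta\bigr)\right| .
\]
A linear twist $n\beta$ does no harm in the later steps---it contributes only a unimodular constant to each difference sum and leaves the relevant derivatives unchanged---so, at the cost of one factor $\log(8XY)$, this reduces $E$ to a sum of the same shape with the inner maximum deleted.

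\emph{Cauchy--Schwarz and the exponent pair.} Next I would apply Cauchy--Schwarz to eliminate the arbitrary coefficients $b_n$. Carrying this out in the variable $m$ and expanding the resulting square over $n$ produces---besides a second factor $\log(8XY)$---a diagonal contribution $\ll XY$ (from $n_1=n_2$) together with off-diagonal sums $\sum_{m\sim X}e\bigl(\rho m^\gamma(n_1^\delta-n_2^\delta)\bigr)$, each of frequency $\asymp Z\lvert n_1-n_2\rvert/Y$ (using $\lvert n_1^\delta-n_2^\delta\rvert\asymp\lvert n_1-n_2\rvert Y^{\delta-1}$). The exponent pair $(k,l)$ applies to these sums; summing the resulting estimates over $h=n_1-n_2$ and over the remaining free variable, and then taking square roots, turns the diagonal into the term $XY/\sqrt{\min\{X,Y,Z\}}$ and the off-diagonal into $B_\delta(Y,X,Z)$---the particular shape $Z^{K}Y^{\Lambda}X^{1-K}$ with $(K,\Lambda)=A(k,l)$ reflecting the single van der Corput differencing step effected by Cauchy--Schwarz before $(k,l)$ is invoked. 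The symmetric computation, with Cauchy--Schwarz in $n$ and off-diagonal $n$-sums of frequency $\asymp Z\lvert m_1-m_2\rvert/X$, yields $B_\gamma(X,Y,Z)$ instead; taking whichever bound is the smaller gives the stated $\min\{B_\gamma,B_\delta\}$. Finally, the correction $\max\{0,[\alpha]\}\,UV$ in $B_\alpha$ absorbs the pairs whose difference sum has frequency too small for the exponent pair to beat the trivial bound (and the subdivision needed when the exponent exceeds~$1$); the hypothesis $\min\{\gamma,\delta\}<1$ ensures this term is absent from at least one of $B_\gamma$, $B_\delta$.

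\emph{Main obstacle.} The delicate point is not any individual estimate but the bookkeeping needed to arrive at precisely the stated $B_\alpha$, with $(K,\Lambda)=A(k,l)$: one must control the frequency $\asymp Z\lvert h\rvert/Y$ of the difference sums (checking that the approximation $\lvert n_1^\delta-n_2^\delta\rvert\asymp\lvert h\rvert Y^{\delta-1}$ is legitimate across the relevant ranges), sum correctly over $h$, and verify that the exponents recombine---via the definition of the $A$-process---into $Z^{K}U^{\Lambda}V^{1-K}$, all uniformly in $\rho$, $X$, $Y$ and in the endpoints of the inner sum. Since this is carried out in \cite[Theorem~2]{B}, my proof would cite that result in the form needed here, note that passing from $\max_{u\le v}$ to $\max_v$ costs only a factor $2$ via $\sum_{u<n\le v}=\sum_{n\le v}-\sum_{n\le u}$, and absorb any residual discrepancy into the implied constants and the two factors $\log(8XY)$.
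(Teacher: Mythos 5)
Your high-level plan is the right one, and the first two steps (removing the maximum via the completion/Fourier-kernel argument, then Cauchy--Schwarz to kill the coefficients $b_n$ and invoke the exponent pair $(k,l)$ on the difference sums, so that the exponents recombine to give $A(k,l)=(K,\Lambda)$) match the strategy underlying \cite[Theorem~2]{B}, which the paper also cites. The issue is in your treatment of the term $XY/\sqrt{\min\{X,Y,Z\}}$. You attribute it to ``the diagonal'' ($n_1=n_2$), but after Cauchy--Schwarz in $m$ and taking the square root, the true diagonal produces only $X\sqrt{Y}=XY/\sqrt{Y}$, and the symmetric version (Cauchy--Schwarz in $n$) produces $XY/\sqrt{X}$. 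Neither of those accounts for the $XY/\sqrt{Z}$ term, which is precisely what is needed when $Z<\min\{X,Y\}$. That contribution instead comes from the \emph{near}-diagonal pairs with $0<|n_1-n_2|\ll Y/Z$, where the frequency of the $m$-sum falls below $1$ and the exponent pair is of no help; this requires a separate argument, and the paper handles it by a different route altogether.

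Concretely, the paper's proof first reduces (as you do) to a bilinear form $E'$ with $|\alpha_m|,|\beta_n|\le 1$, then splits into two cases. When $Y\le Z$ it applies the case $M_2=1$ of Baker's \cite[Theorem~2]{B}, getting $E'\ll XY\log(2+Y)\bigl(Y^{-1/2}+(Z/Y)^K X^{\Lambda-1}\bigr)$. When $Y>Z>1$ it abandons the exponent-pair machinery entirely and uses the Bombieri--Iwaniec double large sieve \cite[Lemma~7.5]{GK} (following a device from \cite[Theorem~1]{RS}) to get $|E'|^2\ll (1+Z)\,X(1+X/Z)Y(1+Y/Z)\ll XY^2+Z^{-1}X^2Y^2$, which is where $XY/\sqrt{Z}$ actually comes from. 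Your sketch has no counterpart to this second case: you rely on Baker's theorem for the details, but that theorem, as the paper uses it, applies only in the regime $Y\le Z$. So while the near-diagonal bookkeeping \emph{could} in principle be made to produce the $XY/\sqrt{Z}$ term directly, you have not carried it out and the reference you defer to does not supply it. To close the gap you would need either to work out the near-diagonal range $0<|h|\le Y/Z$ explicitly and in full generality, or (more cleanly) to add the double large-sieve argument for the case $Z<\min\{X,Y\}$, as the paper does.

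Two small further points. First, you should state explicitly the reduction, used in the paper, that one may assume $\gamma<1$ (by symmetry of $E'$ and $Z$ under swapping $(X,\gamma,\{\alpha_m\})\leftrightarrow(Y,\delta,\{\beta_n\})$), since the correction term $\max\{0,[\alpha]\}UV$ is handled via that symmetry. Second, the trivial bound $|E|\le(X+1)(Y+1)\ll XY$ must be invoked so that the lemma is vacuously true when $X,Y$ or $Z$ is bounded; otherwise the assumption $Z>1$ made in the $Y>Z$ case is unjustified.
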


\begin{proof}
We may assume that $X$, $Y$ and $Z$ are large, since otherwise what the lemma 
asserts is trivially true. We assume, in particular, that $Z > 1$. 
By an application of \cite[Lemma~7.3]{GK}, one finds that 
$$ 
E\ll 
\sum_{X<m\leq 2X} 
\left| \sum_{Y<n\leq 2Y} b_n e\left( \rho m^{\gamma} n^{\delta} + n\theta\right)\right| 
\log(2+Y)
$$
for some $\theta$ that is independent of $m$ and $n$ and satisfies $0\leq \theta <1$. 
Thus, for certain complex numbers $\alpha_m$ and $\beta_n = b_n e(n\theta)$ 
whose modulus is less than or equal to $1$, one has: 
$$ 
\frac{E}{\log(2+Y)}\ll  
\sum_{X<m\leq 2X} \alpha_m 
\sum_{Y<n\leq 2Y} \beta_n e\left( \rho m^{\gamma} n^{\delta}\right) 
= E'\;,
$$
say. 
Since it is trivial that $|E|\leq (X+1)(Y+1)\ll XY$, 
and since neither the sum $E'$ nor the parameter 
$Z=\rho X^{\gamma} Y^{\delta}$ is changed if one swaps (simultaneously) $X$ and $Y$, 
and $\gamma$ and $\delta$, 
and the relevant sequences $\{\alpha_m\}$ and $\{\beta_n\}$, 
the lemma will therefore follow once it is shown  
that if $\gamma < 1$ then
\begin{equation}\label{desired}
E'\ll 
\left( Z^K X^{\Lambda} Y^{1-K} + X^{\frac12}Y + XY^{\frac12} + Z^{-\frac12}XY\right) \log(2+Y)\;. 
\end{equation} 
We assume, accordingly, that $\gamma <1$. 

If $Y\leq Z$, then by the case $M_2=1$ of \cite[Theorem~2]{B}, it follows that
$$ 
E'\ll XY\log(2+Y)\cdot\left( Y^{-\frac12} + (Z/Y)^{K}X^{\Lambda -1}\right) ,
$$
and so, in this case, we obtain the desired bound \eqref{desired}. 
If instead $Y>Z>1$ then we resort to a method used previously 
in completing the proof of \cite[Theorem~1]{RS}.  
Specifically, using the Bombieri-Iwaniec double large-sieve 
\cite[Lemma~7.5]{GK}, we get 
$$\frac{|E'|^2}{1+Z} \ll
\Biggl( \sum_{\substack{X<m,m_1\leq 2X \\ |m^{\gamma} - m_1^{\gamma}|\, \cdot\, \rho Y^{\delta}\leq 1}} 1\Biggr) 
\Biggl(\sum_{\substack{Y<n,n_1\leq 2Y \\ |n^{\delta} - n_1^{\delta}|\,\cdot\, \rho X^{\gamma}\leq 1}} 1\Biggr) 
\ll X(1+\frac{X}{Z})Y(1+\frac{Y}{Z})\;, 
$$ 
which (using that $Y>Z>1$) gives: $|E'|^2 \ll XY^2 + Z^{-1} X^2 Y^2$ 
(so that \eqref{desired} is obtained in this case also). 
\end{proof} 

Supposing now that $M_4 < \sqrt{N_0}$, we put  
$$
d_3^{\ast}(m) := \left( \frac{N_0}{M_4 m}\right)^{\frac78} 
\cdot\sum_{\substack{m_1m_2m_3 = m \\ M_j<m_j\leq 2M_j\ (j=1,2,3)}} 1 
$$
for $m\in{\mathbb N}$. 
Rewriting the left-hand side of \eqref{s42-1}, 
and using the bounds $0\leq d_3^{\ast}(m) < d_3(m) \ll m^{\varepsilon}$, 
we find that for some $s\in \{ 0,1,2\}$ one has: 
\begin{multline*}
S^{\ast}(M_1,M_2,M_3,M_4)\cdot N_0^{\frac78} \\ 
\begin{split} 
 &= \sum_{\frac{N_0}{M_4} < m\leq \frac{8N_0}{M_4}} 
d_3^{\ast}(m) \sum_{M_4 < n\leq \min\{ 2M_4 , \frac{N}{m}\}} 
M_4^{\frac78} n^{-\frac78}\,e\left( 4x^{\frac14} m^{\frac14} n^{\frac14}\right) \\ 
 &\ll N_0^{\varepsilon} 
\cdot \sum_{m=\bigl[\frac{2^s N_0}{M_4}\bigr] + 1}^{\bigl[\frac{2^{s+1}N_0}{M_4}\bigr]} 
\left|  
\sum_{n=[M_4]+1}^{\min\left\{ 2M_4 ,\bigl[\frac{N}{m}\bigr]\right\}} 
(M_4 / n)^{\frac78}\,e\left( 4x^{\frac14} m^{\frac14} n^{\frac14}\right)\right|\;.  
\end{split} 
\end{multline*}  
Here we recall the exponent pairs $(p,q)$ and $(P,Q)=A(p,q)$     
defined in \eqref{UltimateEP} and \eqref{Def-KL-pair}.   
Since $M_4 < N_0 / M_4$, application of the case 
$\gamma = \delta = \frac14$, $(k,l)=(p,q)$ of Lemma~\ref{P-S_lemma}    
to the above sum over $m$ and $n$ gives us: 
$$
S^{\ast}(M_1,M_2,M_3,M_4)\cdot N_0^{\frac78 - \varepsilon} \\ 
\ll Z^{P} \left( \frac{N_0}{M_4}\right)^{Q} M_4^{1-P}  
+ \frac{N_0}{\sqrt{\min\{ M_4 , Z\} }}\;,
$$ 
where $Z = (2^{s+8} N_0 x)^{1/4}\asymp (N_0 x)^{1/4}$. 
It follows, since $(N_0 x)^{1/4} > N_0^{3/4}\gg N_0^{1/2} > M_4\geq N_0^{1/4}$ 
and (see \eqref{KL-properties}) $1-P-Q\geq 0$, 
that we get   
\begin{align*} 
S^{\ast}(M_1,M_2,M_3,M_4) 
&\ll N_0^{\varepsilon - \frac78} \cdot \left( (N_0 x)^{\frac{P}{4}} N_0^{Q} 
M_4^{1-P-Q}  
+ N_0 M_4^{-\frac12}\right) \\ 
&\ll N_0^{\varepsilon - \frac78} \cdot \left( x^{\frac{P}{4}} 
N_0^{Q + \frac{P}{4} + \frac{1-P-Q}{2}}  
+ N_0^{\frac78}\right) \\ 
&\phantom{\ll {N_0^{\varepsilon - \frac78}}}= N_0^{\varepsilon}\cdot\left( x^{\frac{P}{4}} 
N_0^{\frac{2Q - P - 3/2}{4}} + 1\right) . 
\end{align*}    
Hence, using the first part of \eqref{KL-properties}, we find that 
$$
S^{\ast}(M_1,M_2,M_3,M_4) 
\ll N_0^{\varepsilon} x^{\frac{P}{4}} + N_0^{\varepsilon}\ll x^{\varepsilon + \frac{P}{4}}
\quad\text{if $\,M_4< N_0^{\frac12}$} . 
$$
Combining this conditional bound with the complementary one in \eqref{Type-I-M_4-large}, 
and noting that by \eqref{UltimateEP} and \eqref{Def-KL-pair} we have   
$2P = \frac{\varrho + \varepsilon}{\varrho + \varepsilon + 1} 
< \frac{\varrho}{\varrho + 1} + \varepsilon $,  
we conclude that, regardless of whether or not $M_4 < \sqrt{N_0}$, one has:  
$$
S^{\ast}(M_1,M_2,M_3,M_4) \ll x^{\varepsilon + \frac{\varrho\vphantom{_t}}{8\varrho\vphantom{_t} + 8}}\;.   
$$ 
This, together with the definition \eqref{s4} and 
the bound \eqref{VVV} obtained via our initial splitting argument, 
gives us: 
\begin{equation} \label{s4-final}
S_4 :=\sum_{n \leq N} \frac{d_4(n)}{n^{7/8}}\cos\left(8\pi(nx)^{\frac14}+\frac{7\pi}{4}\right) 
\ll x^{\varepsilon + \frac{\varrho\vphantom{_t}}{8\varrho\vphantom{_t} + 8}}  
\end{equation}
for $x^{1/3}<N\leq x^{1/2}$. 
Substituting \eqref{s4-final} into \eqref{thm2-4} 
and taking (for example) $N=[x^{1/2}]$ and $\eta = \varepsilon / 5$ there, we finally obtain
\begin{equation*}
\sum_{n \leq x}\Delta_4(n) \ll 
x^{\frac98 + \frac{\varrho\vphantom{_t}}{8\varrho\vphantom{_t} + 8} + \varepsilon} \;, 
\end{equation*}
which is the bound \eqref{thm3-4}, the second (final) part of Theorem~\ref{thm3}. \qed 

\begin{remark} \label{TY-EPs}

Substitution of Bourgain's exponent pair 
$\left(\frac{13}{84}+\varepsilon , \frac{55}{84}+\varepsilon \right)$ for 
the exponent pair $(p,q) = \left(\varrho + \varepsilon , \varrho + {\textstyle\frac12} + \varepsilon\right)$ 
converts the above proof of \eqref{thm3-4} 
into a direct proof of \eqref{KL-app-1}. 
Via an elaboration of this direct proof 
that also utilises certain other  exponent pairs, 
we can obtain the bound \eqref{Best-by-newEPs}, which is 
a slight improvement of \eqref{KL-app-1}.  
For this we need, specifically, some of the new (previously unverified) exponent pairs 
discovered by Trudgian and Yang  
in their very interesting recent work \cite{TY} surveying  
the non-trivial exponent pairs that 
are known, or at least knowable, on the basis of current theory alone.  
Before coming to the proof of \eqref{Best-by-newEPs}  
we shall first provide relevant details of the new exponent pairs 
utilised there, starting with their place and 
origins in \cite{TY}. 
\par 
Trudgian and Yang show in \cite[Theorem~1.3]{TY}  
that the closure in ${\mathbb R}^2$    
of the set of non-trivial exponent pairs covered by 
their (apparently comprehensive) survey is the convex hull 
of a certain explicitly defined sequence of points 
$(0,1), (\frac12,\frac12), (k_0,\ell_0), (k_1,\ell_1), (k_{-1},\ell_{-1}),   
(k_2,\ell_2), (k_{-2},\ell_{-2}), \ldots\ $ 
with $(k_{-n},\ell_{-n}) = (\ell_{n} - \frac12, k_{n} +\frac12)$ for $n\in{\mathbb Z}$ 
(exponent pairs $(\kappa,\lambda)$ with  
both $\kappa + \lambda \geq 1$ and $\kappa (\lambda - \frac12)\neq 0$ are considered trivial). 
A part of the proof of \cite[Theorem~1.3]{TY} 
involves showing that, provided $\varepsilon>0$ is small,   
the four points $(k_1+\varepsilon , \ell_1+\varepsilon), \ldots ,(k_4+\varepsilon , \ell_4+\varepsilon)$  
are exponent pairs:   
this is established in  \cite[Section~3]{TY}, 
using results of Sargos, Huxley and Bourgain 
(\cite[Theorem~7.1]{Sa}, \cite[Tables~17.1 and~19.2]{Hu-book} and 
\cite[Eqn.~(3.18)]{Bo}, respectively), as well as
the exponent pair $(k_0+\varepsilon,\ell_0+\varepsilon)$   
(which is, of course, the exponent pair of Bourgain that occurs in \eqref{BourgainEP}).  
Of these four new exponent pairs found by Trudgian and Yang, 
the two most relevant to our present discussion are   
$$ 
{\textstyle \left(\frac{4742}{38463}+\varepsilon,\frac{35731}{51284}+\varepsilon\right) } 
\quad\,\text{and}\quad\, 
{\textstyle\left(\frac{715}{10238}+\varepsilon,\frac{7955}{10238}+\varepsilon\right) } \;, 
$$ 
which are $(k_1+\varepsilon,\ell_1+\varepsilon)$ and $(k_4+\varepsilon,\ell_4+\varepsilon)$, 
respectively: our proof of \eqref{Best-by-newEPs} uses both of these, 
and also Bourgain's exponent pair $(k_0+\varepsilon,\ell_0+\varepsilon)$ 
and the exponent pair $A(k_1+\varepsilon, \ell_1+\varepsilon)$ 
(derived from $(k_1+\varepsilon, \ell_1+\varepsilon)$ via the $A$ process), 
which happens to be 
$$
{\textstyle \left(\frac{2371}{43205}+\varepsilon , \frac{280013}{345640} + \varepsilon\right) } 
=(k_5+\varepsilon,\ell_5+\varepsilon) \;. 
$$
\par 
Our proof of \eqref{Best-by-newEPs} is so similar in form and method 
to the above proof of \eqref{thm3-4} (on which it is modelled) that  
it may be adequately described by mentioning  just a few essential details. 
The key innovations are that, for $M_4\geq \sqrt{N_0}$, we apply \eqref{Type-I-1stUse} with 
$(\kappa,\lambda)$ set equal to either $(k_4+\varepsilon,\ell_4+\varepsilon)$ 
or $(k_5+\varepsilon,\ell_5+\varepsilon)$ 
(which option is best depends on the value that $(\log N_0)/\log x$ takes),   
and that, for $M_4 < \sqrt{N_0}$, depending on the values that $(\log N_0)/\log x$ and 
$(\log M_4)/\log N_0$ take, we either apply the case 
$(\kappa,\lambda)= (k_4+\varepsilon,\ell_4+\varepsilon)$ of \eqref{Type-I}, 
or else use Lemma~6 with $(k,l)$ set equal to either $(k_0+\varepsilon,\ell_0+\varepsilon)$ 
or $(k_1+\varepsilon,\ell_1+\varepsilon)$. 
In the most critical cases, where 
$$
N_0\asymp x^{\frac{13689}{73790}} = x^{\frac{(3\times 39)^2}{73790}} \quad\,\text{and}\quad\,  
M_4^2\asymp  N_0^{1 - \frac{1}{39^2}}\asymp x^{\frac{3^2\times (39^2 - 1)}{73790}} = x^{\frac{1368}{7379}}\;,  
$$ 
the three options just outlined  
all yield the same bound for the modulus of the sum $S^{\ast}(M_1,M_2,M_3,M_4)$ 
defined in \eqref{s42-1}, which is: 
$$
S^{\ast}(M_1,M_2,M_3,M_4)\ll x^{\frac{2471}{147580}+\varepsilon} \;. 
$$
In all other cases the bound obtained for $|S^{\ast}(M_1,M_2,M_3,M_4)|$ is no worse than this. 
It follows, by \eqref{VVV}, that  
we can replace the exponent of $x$ on the right-hand side 
of \eqref{s4-final} with $\frac{2471}{147580}+\varepsilon$, 
so that (arguing as at the end of the above proof of \eqref{thm3-4}) we obtain the 
bound \eqref{Best-by-newEPs}. 
Similarly to how Corollary~\ref{cor2} is proved (see below), we 
get from \eqref{Best-by-newEPs} the corollary that 
$\int_1^x \Delta_4(t)dt\ll x^{\frac98 + \frac{2471}{147580}+\varepsilon}$.  

\end{remark}


\section{Proving Corollary \ref{cor2} -- with Segal's identity, and without it} 

\begin{proof}[Proof of Corollary \ref{cor2}] 
Let $k=3$ or $4$.   Since $\Delta_k(u)$ is right-continuous and has derivative
$\Delta_k'(v)=-(P_k(\log v)+P_k'(\log v)) \ll 1+\log^{k-1}v$ at all non-integer points $v>1$, we have
$\Delta_k(u)-\Delta_k([u]) \ll (u-[u])(1+\log^{k-1} u) \ll 1+\log^{k-1}u$ for $u \geq 1$, so that
\begin{align}
\int_1^x \Delta_k(u)du &= \int_1^x \Delta_k([u])du+O\left(\int_1^x(1+\log^{k-1}u)du \right) \label{eq-91}  \\
&=\sum_{n \leq x}\Delta_k(n)+O(|\Delta_k(x)|)+O\left(1+x \log ^{k-1}x\right) \notag
\end{align}
for $x \geq 1$. 
Using \eqref{eq-91} and \eqref{D34-bound}, we find that Corollary \ref{cor2} follows from Theorem \ref{thm3}. 
\end{proof} 

Here we have to mention Segal's identity which connects the sum of $\Delta_k(n)$ and the integral of $\Delta_k(x)$.
\begin{lemma}[Segal \cite{S}] \label{Segal}
Let $f(n)$ be a function of a positive integral variable, and suppose 
$\sum_{n \leq x} f(n)=g(x)+E(x),$ where $g(x)$ is twice continuously differentiable, and 
$g''(x)$ is of constant sign for $x \geq 1$. Then 
\begin{equation*} 
\sum_{n \leq x} E(n)=\frac12 g(x)+(1-\{x\})E(x)+\int_1^x E(t)dt+O(|g'(x)|)+O(1),
\end{equation*}
where $\{x\}=x-[x]$ indicates the fractional part of $x$. 
\end{lemma}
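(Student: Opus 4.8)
The plan is to move between $\sum_{n\le x}E(n)$ and $\int_1^x E(t)\,dt$ in two stages. In the first stage I work with the unsmoothed summatory function $F(x):=\sum_{n\le x}f(n)=g(x)+E(x)$, for which the passage from sum to integral is an elementary step-function computation; in the second stage I correct for the smooth part $g$ by Euler--Maclaurin summation, and this is where the hypothesis that $g''$ has constant sign is used.

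First I would establish the identity $\sum_{n\le x}F(n)=\int_1^x F(t)\,dt+(1-\{x\})F(x)$. Indeed $F$ is constant, equal to $F(n)$, on each interval $[n,n+1)$, so $\int_1^x F(t)\,dt=\sum_{n=1}^{[x]-1}F(n)+\{x\}F(x)$; since $\sum_{n=1}^{[x]-1}F(n)=\sum_{n\le x}F(n)-F(x)$, this rearranges to the claimed identity. Subtracting $\sum_{n\le x}g(n)$ from both sides, replacing $F$ by $g+E$ on the right (both under the integral sign and in the term $(1-\{x\})F(x)$), and using $\int_1^x F=\int_1^x E+\int_1^x g$, I obtain
$$\sum_{n\le x}E(n)=\int_1^x E(t)\,dt+(1-\{x\})E(x)+\Bigl(\int_1^x g(t)\,dt+(1-\{x\})g(x)-\sum_{n\le x}g(n)\Bigr)\,.$$
Comparing with the statement to be proved, it now suffices to show that the bracketed expression equals $\tfrac12 g(x)+O(|g'(x)|)+O(1)$.

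For that final step I would use the first-order Euler--Maclaurin formula on $[1,[x]]$, namely $\sum_{n=1}^{[x]}g(n)=\int_1^{[x]}g(t)\,dt+\tfrac12\bigl(g(1)+g([x])\bigr)+\int_1^{[x]}\bigl(\{t\}-\tfrac12\bigr)g'(t)\,dt$, and then integrate the last integral by parts using the bounded $1$-periodic antiderivative $\tfrac12\bigl(\{t\}^2-\{t\}+\tfrac16\bigr)$ of $\{t\}-\tfrac12$; this expresses the Euler--Maclaurin remainder in terms of $g'([x])$, $g'(1)$ and $\int_1^x|g''(t)|\,dt$. Here the constant-sign hypothesis on $g''$ enters decisively: it gives $\int_1^x|g''(t)|\,dt=|g'(x)-g'(1)|=O(|g'(x)|)+O(1)$, and---since it also makes $g'$ monotone on $[1,\infty)$---it yields $\sup_{x-1\le t\le x}|g'(t)|\le|g'(x)|+\int_{x-1}^x|g''(t)|\,dt=O(|g'(x)|)+O(1)$. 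The last bound controls the remaining short-range discrepancies, namely $\int_{[x]}^x g(t)\,dt-\{x\}g(x)$, $g([x])-g(x)$ and $g'([x])-g'(x)$, all of which are $O\bigl(\sup_{x-1\le t\le x}|g'|\bigr)$. Inserting these estimates, the bracketed expression collapses to $g(x)-\tfrac12 g(x)+O(|g'(x)|)+O(1)=\tfrac12 g(x)+O(|g'(x)|)+O(1)$, which completes the argument.

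The part I expect to require the most care is not any individual computation but the bookkeeping of the error terms localised near the endpoint $x$---the fractional-interval contribution $\int_{[x]}^x$, the replacement of $[x]$ by $x$ in the arguments of $g$ and $g'$, and the Euler--Maclaurin remainder---and checking in each case that the constant-sign property of $g''$ really does collapse the bound all the way down to $O(|g'(x)|)+O(1)$, rather than leaving something like $O\bigl(\sup_{x-1\le t\le x}|g'|\bigr)$ that cannot be simplified further. The step-function identity for $F$ and the algebraic rearrangement into the bracketed form are, by comparison, routine.
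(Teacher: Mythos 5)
Your proof is correct. The paper does not itself prove this lemma---it cites Segal \cite{S} and summarises his method in one sentence (reduce $\sum_{n\le x}E(n)$ to an expression in $\sum_{n\le x}f(n)$, $\sum_{n\le x}nf(n)$ and $\sum_{n\le x}g(n)$, then apply partial summation and Euler--Maclaurin). Your argument is an accurate reconstruction of that strategy: the step-function identity $\sum_{n\le x}F(n)=\int_1^x F(t)\,dt+(1-\{x\})F(x)$ is the same computation that Segal performs by writing $\sum_{n\le x}F(n)=([x]+1)F(x)-\sum_{n\le x}nf(n)$ and then partial-summing $\sum nf(n)$ against the integral of $F$, while your second stage (first-order Euler--Maclaurin applied to $g$, with the constant sign of $g''$ collapsing the remainder and the short-range endpoint discrepancies to $O(|g'(x)|)+O(1)$) matches what the paper attributes to Segal.
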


In his proof of this Segal first transformed the sum $\sum_{n\leq x} E(n)$ into an 
expression involving the sums $\sum_{n\leq x} f(n)$, $\sum_{n\leq x} nf(n)$ and $\sum_{n\leq x} g(n)$. 
He dealt with the second last and last of these sums using, respectively,  
partial summation and the Euler-Maclaurin summation formula. 
Segal's identity was generalized to a relation between the sum and the integral of higher powers 
of $E(x)$ by Furuya \cite{F}. Furuya \cite{F} and Cao, Furuya, Tanigawa and Zhai \cite{CFTZ} gave 
detailed expressions for the difference between the continuous mean (integral) and 
discrete mean (sum) of $\Delta_2(x)$ and its powers.

In our case $f(n)=d_k(n)\ (k=3, 4)$, and we have $g(x)=xP_k(\log x)$ and $E(x)=\Delta_k(x)$.
From \eqref{shukou} and \eqref{shukou4}, we see that 
\begin{align*}
g''(x)&=\frac{1}{x}(P_k'(\log x)+P_k''(\log x))  \\
      &=\begin{cases} \d \frac{1}{x}\left(\log x+3\gamma_0\right) & k=3, \\[1ex]
              \d \frac{1}{x} \left(\frac12 (\log x)^2 + 4 \gamma_0 \log x   
              + 4 \gamma_1+6\gamma_0^2\right) & k=4.
         \end{cases}
\end{align*}  
Since $\gamma_0=0.577\ldots$ and $\gamma_1=0.07281\ldots$, 
the assumptions of Lemma \ref{Segal} are satisfied for $k=3, 4$.
Therefore 
Lemma \ref{Segal} can be used to show that Corollary \ref{cor2} follows 
immediately from Theorem \ref{thm3}, and vice versa. 
We can also give a direct proof of Corollary \ref{cor2}, using Lemma \ref{voronoitype} as follows.

Let $k$ be a positive integer (only later do we assume $k=3$ or $4$). 
By the definition of $\Delta_k(x)$ we have
\begin{equation} \label{9Aa} 
\int_0^x \Delta_k(u)du=\int_0^x\left(\sum_{n \leq u}d_k(n)\right)du-\int_0^x uP_k(\log u)du.
\end{equation} 
Theorem II.2.5 of Tenenbaum \cite{Te} with $a_n=d_k(n)$ implies that 
\begin{equation} \label{9Bb} 
\int_0^x \left(\sum_{n \leq u}d_k(n)\right)du 
= \frac{1}{2\pi i} \int_{(c)} \zeta^k(s) \frac{x^{s+1}}{s(s+1)}ds,
\end{equation} 
where $c$ is an arbitrary constant greater than $1$. Let $T$ be a large positive number.  
By Cauchy's theorem we have
\begin{multline*} 
\frac{1}{2\pi i}\int_{c-iT}^{c+iT}\zeta^k(s)\frac{x^{s+1}}{s(s+1)}ds  \\ 
\begin{split} 
 &=\frac{1}{2\pi i}\int_{\frac12-iT}^{\frac12+iT} 
\zeta^k(s)\frac{x^{s+1}}{s(s+1)}ds+\Res_{s=1}\left(\zeta^k(s)\frac{x^{s+1}}{s(s+1)}\right) \\
&{\phantom{=}} 
+\frac{1}{2\pi i}\left(\int_{c-iT}^{\frac12-iT}+\int_{\frac12+iT}^{c+iT}\right)\zeta^k(s)\frac{x^{s+1}}{s(s+1)}ds.
\end{split}  
\end{multline*} 
The last two integrals above are equal in modulus (they are $iz$ and $i\overline{z}$, for some $z\in{\mathbb C}$).   
Using the estimate $\zeta(\sigma+iT) \ll T^{\frac{c-\sigma}{3(2c-1)}}$ for $1/2 \leq \sigma \leq c$, we get 
\begin{align*}
\int_{\frac12+iT}^{c+iT}\zeta^k(s)\frac{x^{s+1}}{s(s+1)}ds 
& \ll T^{\frac{kc}{3(2c-1)}-2}x \int_{\frac12}^{c}\left(\frac{x}{T^{\frac{k}{3(2c-1)}}}\right)^{\sigma}  d\sigma \\
& \ll x^{\frac32}T^{\frac{k}{6}-2}\left(\log\frac{T^{\frac{k}{3(2c-1)}}}{x}\right)^{-1}
\end{align*}
for $T^{\frac{k}{3(2c-1)}} > x$. Hence for $k \leq 12$, these integrals along the horizontal line segments  
tend to $0$ when $T \to \infty$. Thus, recalling \eqref{9Aa} and \eqref{9Bb}, we get  
\begin{align}
\int_0^x \Delta_k(u)du&=\frac{1}{2\pi i} \int_{(\frac12)} \zeta^k(s) \frac{x^{s+1}}{s(s+1)}ds
+\Res_{s=1}\left(\zeta^k(s)\frac{x^{s+1}}{s(s+1)}\right) \label{8-01}   \\
& \quad  -\int_0^x uP_k(\log u)du ,  \nonumber
\end{align}
provided $k\leq 12$. The second and third terms on the right-hand side of \eqref{8-01} cancel out. 
In fact by \eqref{Pk} we have
$$
\int_0^x uP_k(\log u)du=\frac{1}{2\pi i} \int_0^x \left(\int_{C(1, r)}\zeta^k(s)\frac{u^s}{s}ds\right) du,
$$
where $C(1,r)$ is a circle of radius $r < 1/4$ with center $1$. 
And by inverting the order of integration in the latter (iterated) integral, we get 
\begin{equation*} 
\int_0^x uP_k(\log u)du=\frac{1}{2\pi i} \int_{C(1, r)}\zeta^k(s)\frac{x^{s+1}}{s(s+1)}ds
=\Res_{s=1}\left(\zeta^k(s)\frac{x^{s+1}}{s(s+1)}\right). 
\end{equation*} 
Hence (using also that $\int_0^1 \Delta_k(u)du\ll1$) Equation \eqref{8-01} simplifies to: 
\begin{align*} 
\int_{1}^{x}\Delta_k(u)du =\frac{1}{2\pi i}\int_{(1/2)}\zeta^k(s)\frac{x^{s+1}}{s(s+1)}ds+O(1).
\end{align*}
We have shown this holds provided $k\leq 12$. 

Next, we truncate  the above integral at the points $s=\frac12 \pm iT$, 
with $T=2\pi(x(N+\frac12))^{\frac1k}$, as in Lemma \ref{voronoitype}. 
Assuming now that $k=3$ or $k=4$, we estimate the truncation error using  
$\int_{1/2+iT}^{1/2+i\infty}|\zeta^k(s)/s(s+1)||ds| \ll T^{-1+\varepsilon}$, 
which is a valid bound in these two cases. Hence 
\begin{equation} \label{8-1}
\int_{1}^{x}\Delta_k(u)du=\frac{1}{2\pi i}\int_{1/2-iT}^{1/2+iT}\zeta^k(s)\frac{x^{s+1}}{s(s+1)}ds
+O\left(x^{3/2}T^{-1+\varepsilon}\right)+O(1).
\end{equation}
As we did in \eqref{U4sekibun}--\eqref{U4},  we now (using Cauchy's theorem) shift  the integral 
on the right-hand side of \eqref{8-1}  to $\int_{-\delta-iT}^{-\delta+iT}$,  
where $\delta$ is taken to be  as in Lemma \ref{voronoitype}.  This gives us: 
\begin{align} \label{8-2}
\int_{1}^{x}\Delta_k(u)du&=\frac{1}{2\pi i}\int_{-\delta-iT}^{-\delta+iT}\zeta^k(s)\frac{x^{s+1}}{s(s+1)}ds
+\left(-\frac12\right)^k x+O\left(x^{3/2}T^{k/6-2}\right)  \\[1ex]
& \quad +O\left(x^{1-\delta}T^{k(1/2+\delta)-2}\right) +  O\left(x^{3/2}T^{-1+\varepsilon}\right) +O(1) \notag 
\end{align}
(with the first two $O$-terms here coming from the relevant integrals over horizontal line segments). 
Application of Lemma \ref{voronoitype} to the right-hand side of \eqref{8-2} yields 
\begin{align} \label{eq-9}
\int_1^x \Delta_k(u)du=V_k(x,N)+O\left(\frac{x^{\frac32+\varepsilon}}{(xN)^{\frac{1}{k}}}\right)+
\begin{cases} -\frac{1}{8}x+O\left(x^{\frac{5}{6}}\right), &  \text{if $k=3$}, \\
              O\left(xN^{\delta}\right), & \text{if $k=4$},
\end{cases}
\end{align}
where $V_3(x,N)$ and $V_4(x,N)$ are the functions defined by \eqref{V3} and \eqref{V4}, respectively.  
The sums over $n$ that occur in \eqref{V3} and \eqref{V4} can be estimated as in the proof of Theorem \ref{thm3},
taking $N=[x^{1/(k-2)}]$. Substituting the resulting bounds for $V_3(x,N)$ and $V_4(x,N)$ into \eqref{eq-9}, 
and taking $\delta=\varepsilon$ in the case $k=4$, and $\varepsilon\leq\frac{3}{20}$  in either case, 
we get Corollary \ref{cor2}.

\begin{remark} \label{Rem-msq2}
Squaring the right-hand sides of \eqref{eq-9} and using \eqref{msq-3}, \eqref{msq-4} and Cauchy's inequality, and 
taking $N=[X]$ in both cases, we get
\begin{align*} 
\int_1^X\left(\int_1^x \Delta_3(u)du\right)^2dx&=\left(\frac{1}{72\pi^4}\sum_{n=1}^{\infty}\frac{d_3^2(n)}{n^2}+\frac{1}{192}\right)X^3
+O(X^{\frac{17}{6}+\varepsilon})    
\intertext{and} 
\int_1^X\left(\int_1^x \Delta_4(u)du\right)^2dx&=\frac{1}{104\pi^4}\left(\sum_{n=1}^{\infty}\frac{d_4^2(n)}{n^{7/4}}\right)X^{\frac{13}{4}}
+O(X^{\frac{25}{8}+\varepsilon}) 
\end{align*} 
(we use here the estimate for the integral $\int_1^X xV_3(x,N)dx$ that the first derivative test yields). 
These last two asymptotic formulae, each containing an explicit main term and (smaller) $O$-term, 
are qualitative improvements of the corresponding upper bounds, respectively 
$O(X^{3+\varepsilon})$ and $O(X^{\frac{13}{4}+\varepsilon})$, 
that were obtained by Ivi\'c  \cite[Theorem~2]{I2}.   
We remark that although the `$c_4=\varepsilon$' in \cite[p.~37, l.~4]{I2} needs correction, 
the connected result $\rho_4\leq\frac{13}{4}$ in \cite[Theorem~2]{I2} comes from 
(correctly) having $c_4 = \frac18 +\varepsilon$. 
\end{remark}

Using \eqref{eq-91} (or Lemma \ref{Segal}) and \eqref{8-1}, one can connect the sum   
$\sum_{n \leq x}\Delta_k(n)$ with the truncated integral 
$\frac{1}{2\pi i}\int_{1/2-iT}^{1/2+iT} \zeta^k(s)\frac{x^s}{s(s+1)} ds$.    
This connection is however somewhat indirect, since involves (as a first step) 
the approximation of the sum by the corresponding integral: $\int_1^x \Delta_k(u)du$.     
The connection becomes a more direct one when one uses instead the quite different method 
that we have developed in Sections 2--5. Though this more direct method is a little complicated, 
we think that it is new, and that it will have applications in studying sums of error terms from  
other interesting arithmetical problems. It can be used even where Segal's identity 
(Lemma \ref{Segal}) does not apply: it shall, for example, be employed in 
the the first and second author's future study \cite{MT} of
the summatory function of the error term from the asymptotic formula for Ingham's classical sum 
$\sum_{m=1}^{n-1} d(m)d(n-m)$. 

\medskip

\newpage

\noindent T. Makoto Minamide\\
Graduate School of Sciences and Technology for Innovation\\
Yamaguchi University\\
Yoshida 1677-1, Yamaguchi 753-8512, Japan\\
e-mail: minamide@yamaguchi-u.ac.jp\\

\noindent Yoshio Tanigawa\\
Nagoya, Japan \\
e-mail: tanigawa@math.nagoya-u.ac.jp \\

\noindent Nigel Watt\\
Dunfermline, Scotland\\
e-mail: wattn@btinternet.com

\end{document}